
\documentclass[11pt, a4paper]{article}
\usepackage{authblk}
\usepackage{amsthm}  
\usepackage{amssymb}
\usepackage{amsmath}
\usepackage{graphicx}
\usepackage{hyperref}  
\usepackage{enumerate}
\usepackage{amssymb,color}

\newtheorem{theorem}{Theorem}[section]

\numberwithin{theorem}{section}
\numberwithin{equation}{section} 
\newtheorem{definitionN}[theorem]{Definition}
\newtheorem{lemmaN}[theorem]{Lemma}
\newtheorem{exampleN}[theorem]{Example}
\newtheorem{unnamed}[theorem]{}

\newtheorem{remarkN}[theorem]{Remark}

\definecolor{MyGreen}{RGB}{29,162,55}

\bibliographystyle{abbrv}

\begin{document}

\title{\bf 
  The Extension of the Desargues Theorem, The Converse,
  Symmetry and Enumeration
}

\author{Aiden A. Bruen
}
\affil{
  Adjunct Research Professor,\\
  School of Mathematics and Statistics,\\
  Carleton University,\\
  Ottawa, Ontario, Canada,\\
  {\tt abruen@math.carleton.ca}
}

\date{}
\maketitle


\bigskip\noindent
  Keywords: Arcs, Desargues theorem, Desargues configuration, simplex, coordinate system, 5-compressor, projective spaces, duality, polarity, finite field

\medskip\noindent
2020 MSC: 51A30, 51E15

%
%
%
\section{Introduction}
The fundamental building blocks of projective geometry 
are the theorems of Pappus of Alexandria, living in the 
fourth century A.D., and the theorem of Desargues published 
in Paris by Girard Desargues a French architect, engineer 
and mathematician in 1629.

The celebrated Desargues perspective theorem in the plane, 
over any field or skew field, states that when two triangles 
are in perspective the meets of corresponding sides are collinear. 
The theorem and the convserse
can be proven by using 
coordinates or by invoking the principle of duality in projective geometry.

In Coxeter (\cite{coxeter}) the author writes: 
\begin{itemize}
\item[]
Is it possible to develop a geometry having no circles, 
no distances, no angles, no intermediacy (or ``betweenness''), 
and no parallelism?
Surprisingly, the answer is Yes: what remains is projective geometry: 
a beautiful and intricate series of propositions, simpler than 
Euclid’s but not too simple to be interesting... 
The original motivation for this kind of geometry 
came from the fine arts. It was in 1425 that the 
Italian architect Brunelleschi began to discuss the 
geometrical theory of perspective which was consolidated 
into a treatise by Alberti a few years later. 
\end{itemize}
Vanishing points in drawing become mathematical points at infinity, 
all lying on a line at infinity.  For an elaboration of this 
we refer to ``Art and Geometry'' by William Ivins, 
a former curator of the Metropolitan Museum of Art~\cite{ivins}.

Crannell and Douglas~\cite{crannell}
and Lord~\cite{lord} also provide interesting and related
background.

A brief overview of the connection of the 
Desargues theorem with axiomatics is as follows. 
We start with a point $V$ and line $l$ which may or may not 
be on $V$ in a projective plane and study a possible 
``central collineation'' $T$ that fixes $V$, all lines 
through $V$ and all points on $l$.

Let $A$ be a point unequal $V$ and not on $l$. 
Then $T(A)$ has to be a point $D$ on the line $VA$. 
Let $B$ be chosen with $B$ not on $l$ or $VA$. 
$T(B)$ is a point $E$ on $VB$.
Using the corresponding pair $\{A,D\}$ we see that $E$ 
must be the intersection of $BV$ and $DZ$ where $AB$ meets $l$ in $Z$.
Similarly, let $C$ to be a point on $VC$ where the lines 
$VA$, $VB$, $VC$ are distinct.
To find $T(C)=F$ we can use the pair $\{A,D\}$.  
Then $F$ is the point of intersection of $VC$ and $DW$,
where $AC$ meets $l$ in $W$.  We can also use the pair $\{B,E\}$.  
Then $F$ must be the point where $CV$ and $EU$ 
meet where $BC$ meets $l$ in $U$. Both constructions must give 
the same answer for $F$.
Thus, for the central collineation to exist, the Desargues theorem 
must hold for the triangles $ABC$, $DEF$, in perspective from $V$,
since the intersections of the corresponding sides must all lie 
on the line $l$.

Conversely, if for all triangles $ABC$, $DEF$ in perspective 
from $V$, the intersections of corresponding lines lie on a 
line then there exists a such a central collineation $T$,
fixing $V$, all lines on $V$ and a line $l$ pointwise, 
where $l$ contains the intersections of corresponding lines. 
This suggests an indirect, and involved, proof of the 
extended Desargues theorem in any dimension.

However, the emphasis here is on synthetic reasoning and the 
resulting configurations. From~\cite[p.\ 141]{rota}: 
``The proof of Desargues’ theorem of projective geometry 
comes as close as a proof can to the Zen ideal.  
It can be summarized in two words: `I see'.''
The topic of geometrical configurations has undergone a resurgence 
in recent years: see for example
Conway and Ryba~\cite{conway},
Luotoniemi~\cite{luotoniemi}
and
G\'evay~\cite{gevay}.

In this paper, inter alia, we show that the analogue of the 
Desargues theorem holds in all dimensions over infinite and 
finite fields of sufficiently large order. The result is that 
if two simplexes with no common points or faces are in perspective 
from a point then the intersections of corresponding $t$-spaces 
are $t-1$ spaces, all lying in a hyperplane $H$ for $t=1,2,....n-1$.
The simple, synthetic proof of this result, and the converse, 
in Section~\ref{section:DesInNDim}, valid in all dimensions, 
only assumes that pairs of corresponding edges meet in a point. 
The second proof is based on arcs as in~\cite{BruenBruenMcQuillan}. 
In the planar case, although several authors use a 5-point in 
3 dimensions the proofs can still be quite complicated. 
The elegant proof in~\cite{conway} uses a different approach -
see Section~\ref{section:fourthProof}.

Regarding previous work we mention that in 
1916~\cite[p.\ 43-44]{veblenYoungVol1} the authors offer a 
proof for $n=3$. The accompanying diagram for this 3-dimensional 
case includes 15 points and many lines. There are many interesting 
recent papers, too numerous to cite, in the general area.  
The paper by G\'evay includes several references. The 
papers by Luotenemi on models of important configurations 
such as the Desargues configuration and the double six
are informative 
and very helpful for visualization.

A potential problem with synthetic proofs in geometry is 
unanticipated coincidences of points or collinearities of lines. 
Concerning the classical Hessenberg theorem, showing that 
Pappus implies Desargues, we have the following in Pedoe - 
see Introduction (\cite{pedoe}),
``It should interest those who may be disposed to believe 
that all outstanding problems in classical projective geometry 
have been solved to note that Pickert in 
his {\em Projective Ebenen} (Berlin 1955) lists eight defective 
versions of the classical Hessenberg theorem. 
Two of these defective proofs are by Hessenberg himself''.

In~\cite{veblenYoungVol1}, for the case $n=3$, 
although they do not state it, the authors assume 
that the two simplexes do not share a face. 
If the simplexes have a face in common the above result 
is false for the case $t=n-1$.  Merely assuming perspectivity 
from a point is not enough.

In \cite[p.\ 54, problem 26]{{veblenYoungVol1}}, 
the writers enunciate the extension of Desargues theorem 
in $n$ dimensions. No proof is offered. They cite research 
by A.~Cayley 
``Sur quelque th\'eor\`eme de la g\'eom\'etrie de position' , 
Crelle’s Journal, vol.~31, (1846): Collected papers, 
Volume 1, p.~317, and also a paper by G.~Veronese, 
``Behandlung der projectivischen Verh\"altnisse der R\"aume 
von verschiedenen Dimensionen durch das Princip des 
Prjjicirens und Schneidens'', {\em Math Annalen}, 
vol.~34, 1889 together with a paper by W.~B.\ Carver~\cite{carver}.
In this paper the author states that Cayley's paper 
describes sections by the plane or 3-dimensional space of 
the complete $n$-point in higher dimensions and that the 
Veronese paper is concerned with ``the configurations 
thus obtained in $r$ dimensions''. In a footnote Carver 
refers to the Desargues theorem in the plane as an 
``incidental occurrence of these configurations''. 
On line~7 the author writes: ``both Cayley and Veronese 
state that these same configurations can also be obtained 
as projections of higher-dimensional figures''. 

It appears that no proof of the extended Desargues Theorem 
has ever been written down.
Apart from the sketch of the proof above, using 
homogeneous coordinates, we offer two further, 
different proofs of the extended theorem, valid for all $n$.

In the 1964 paper by S.~R.\ Mandan~\cite{mandan} 
entitled ``Desargues Theorem in $n$-space'' the author 
describes a result concerning two simplexes of size $n+2$ in 
$n+1$-space which, between them, span a $2n$-space or a 
$2n+1$ space.  For $n=1$ the Desargues theorem in the plane 
follows from one of the cases. However, the extended Desargues theorem 
deals with two simplexes in an $n$-space which therefore 
span just that $n$-space. 
Thus, the result in~\cite{BruenThasBlokhuis}
does not apply to the result here.

In Rota~\cite[p.\ 145]{rota} referring to the first of the 
6 volumes of H.~F.\ Baker's {\em Principles of Geometry} 
he writes: ``After an argument that runs well over 
one hundred pages, Baker shows that beneath the statement 
of Desargues’ theorem, another far more interesting 
geometric structure lies concealed. This structure is 
nowadays called the Desargues configuration''.
The new results here on the configuration also form a 
central part of this paper. We present the detailed structure 
of the intersections of corresponding edges of the two 
simplexes in perspective. Those points all lie in a hyperplane~$H$.
The consequence of the assumption that the simplexes do not 
share a face plays a crucial part in our simple proof of 
the extended Desargues theorem in Section~\ref{section:DesInNDim}.
For $n=3$ that configuration in the plane~$H$ consists of the 4 points 
and 6 lines of a complete quadrilateral. We begin with two simplexes, 
with no point or face in common, and the vertex of 
perspective,
9 points in all.  We then complete the configuration 
by adjoining the set of six intersection points in~$H$ which are disjoint 
from the simplexes and the vertex for a total of fifteen points. 
Mirabile dictu, each and every one of these 15 points then 
serves as the vertex of two simplexes in perspective with 
no point or face in common, such that the intersections 
of their corresponding edges are in the 15-point set and 
form a complete quadrilateral lying in a plane.  
The analogous result holds in all dimensions and this is 
just the beginning of the fun.  There is much more to be had!

As in~\cite{BruenBruenMcQuillan} over finite fields 
we can also, in principle, enumerate the total number
of configurations of simplexes in perspective in $n$ dimensions 
since arcs can be enumerated. The use of arcs clarifies 
the classical idea of ``points in general position''. 
Because of the method used in labelling points we know 
that points in the configuration are distinct. There is 
no concern about unexpected possible coincidences of points 
or collinearities of triples such as those detailed in 
Pedoe in the proof of the Hessenberg theorem.

The inherent combinatorial reciprocity in the configurations 
can frequently be realized as a geometrical polarity, 
at least when the characteristic is not~2.
Working over general fields, and not just the real or 
complex numbers, adds fresh insights and opens up new problems.
For example the question of the number of self-conjugate
points also arises. In the plane there are at most 4 such 
points: this is achieved only in 
characteristic~3 \cite{BruenBruenMcQuillan},
\cite{bruenMcQGeomConfigs},
\cite{bruenMcQFourSC}.
The issues above will be discussed in a future paper.

%
%
%
\section{Preliminaries: Definitions and Notation.}
\label{section:preliminaries}
a. {\em Dimension.}
In this paper the dimension of a projective space
is the projective dimension which is one less than the rank 
of the underlying vector space.  For example a projective plane 
has dimension~2 while the underlying vector space has rank~3.
The space generated by subspaces $A, B$ - their 
union or join - or by a set of points $S$ is 
denoted by $\langle A, B\rangle$ or $\langle S\rangle$ respectively. 
The rank formula for vector subspaces states the 
following: 
$rank(E+F)= rank(E) + rank (F) - rank (E\cap F)$.
For projective subspaces, with $Dim$ donating dimension, 
we have a similar dimension formula as follows:
$Dim \langle E, F\rangle  =Dim E +Dim F - Dim (E\cap F)$, where, 
$\cap$
denotes the intersection of spaces $E,F$.

[A word of caution: if the intersection is the zero vector 
its vector rank is zero. To make the formula work in the 
projective case, the dimension of the empty intersection is 
one less than zero, i.e., -1! . A test case is afforded by 
set of 2 skew lines in 3 dimensions].

b. {\em Simplex.}
In $\Sigma_n$, the projective space of dimension $n$, a 
simplex is a set $X$ of $n+1$ points which generate the space. 
This implies that each $t$-subset of $X$ generates a $(t-1)$-space.
Dually, we can also describe a simplex as a set of $n+1$ 
hyperplanes or faces, that is, subspaces of 
dimension $n-1$ generated by subsets of $X$ of size $n$.

In the plane a simplex is a set of 3 non-collinear points. 
The faces are the 3 lines joining pairs of points. 
The 3 points or 3 lines are different descriptions of 
what is really the same object ie a triangle with 
its points and lines. A simplex is a self-dual concept.
The line joining a pair of points of the simplex is called an edge. 
Each subset of $t$ points of $X$ generates a subspace of 
dimension $t-1$ for $t$ lying between 2 and $n$. 
Dually, the intersection of m faces is a subspace of 
dimension $n-1-[m-1]$, i.e., of dimension $n-m$, $m=1,2,\ldots$.

In dimension 3 a simplex is a set $X$ of 4 points, 
$X=\{1,2,3,4\}$, not all lying in a plane. 
The four faces are the 4 triangles generated by the 
sets $\{1,2,3\},\{1,2,4\},\{1,3,4\}, \{2,3,4\}$.
The simplex can be visualized by drawing a figure 
where the point 4 is above the plane generated by $1,2,3$.  
It represents a tetrahedron and, as mentioned, is 
self-dual - \cite{maxwell}.
A reciprocity can be set up by mapping each point to a face. 
For example we can map 1 to the opposite 
face $\langle234\rangle$, 2 to $\langle 134\rangle$ 3 
to $\langle 124\rangle$ and 4 to $\langle 123\rangle$. 
Then $\langle1,2\rangle$ maps to the intersection 
of $\langle 234\rangle$ with $\langle1,3,4\rangle$, 
i.e., to $\langle 34\rangle$, and so on.

%
%
%
\section{The Desargues theorem in $n$ dimensions.}
\label{section:DesInNDim}

We start with two simplexes 
$$A= \{A_1,A_2,\ldots,A_n,A_{n+1}\}\hbox{ and }
B= \{B_1, B_2,\ldots,\allowbreak B_{n+1}\}$$
in $\Sigma_n$, an $n$-space.
$A, B$ are perspective from the vertex $V$ if there are $n+1$ lines 
on $V$ with each line containing the points 
$A_i, B_i$ for $i=1,2,\ldots,n, n+1$.

\begin{theorem}
Let $A,B$ denote two simplexes in $\Sigma_n$ which 
do not share a point or face. Assume there is a 
correspondence between the points $A_i, B_i$ such 
that the edge joining $A_i$ to $A_j$ intersects 
the edge joining $B_i$ to $B_j$ in a point, $1\leq i,j\le n+1$. 
Then $A, B$ are in perspective from a vertex $V$.
  \label{theorem:simplexesInPersp}
\end{theorem}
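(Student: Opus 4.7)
The plan is to show that the $n+1$ lines $\ell_i := A_iB_i$ all pass through a common point $V$, in three stages. First I would observe that for each pair $i \neq j$, the hypothesis that the edge $A_iA_j$ meets the edge $B_iB_j$ forces the four points $A_i, A_j, B_i, B_j$ to span at most a plane; consequently the ``cross-diagonals'' $\ell_i = A_iB_i$ and $\ell_j = A_jB_j$ also meet, in some point that I will call $V_{ij}$. This step is immediate from the projective dimension formula stated in Section~\ref{section:preliminaries}.

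Second, I would prove that for any three indices $i,j,k$ the lines $\ell_i, \ell_j, \ell_k$ actually concur. Apply the classical lemma that three pairwise-meeting projective lines either concur or are coplanar: if they are not coplanar we are done immediately. In the coplanar case, assuming $n \geq 3$, pick an index $m \in \{1,\dots,n+1\} \setminus \{i,j,k\}$ with $A_m \notin \pi$, where $\pi$ is the common plane; such an $m$ exists because the simplex $A$ spans $\Sigma_n$ while $A_i, A_j, A_k$ already lie in $\pi$. Then $\ell_m$ is not contained in $\pi$, so $\ell_m \cap \pi$ is a single point. But by the first stage $\ell_m$ meets each of $\ell_i, \ell_j, \ell_k$, and those three meeting points all lie in $\pi$; hence they all collapse onto the single point $\ell_m \cap \pi$, which shows $\ell_i, \ell_j, \ell_k$ concur there. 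For the base case $n=2$ everything is coplanar and this trick is unavailable, so one must appeal to the classical converse of the planar Desargues theorem; but this is already a standard ingredient of the subject and is consistent with the introduction's remark that the $n=2$ case is the historical foundation.

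Third, I would assemble global concurrence from the triple-wise statement. Because the simplex $A$ spans $\Sigma_n$ with $n \geq 2$, not all of $A_1,\dots,A_{n+1}$ lie on a single line, so at least two of the lines $\ell_i,\ell_j$ are distinct; fix such a pair and let $V := \ell_i \cap \ell_j$, a single point. For every other index $k$, stage two applied to $\{i,j,k\}$ forces $\ell_k$ to pass through this unique intersection point $V$, completing the argument. I expect the main obstacle to be the coplanar subcase of stage two: this is precisely the place where, in dimension $n=2$, one cannot avoid invoking a non-trivial planar theorem, and it is also the place where one must be slightly careful about degeneracies such as $\ell_i = \ell_j$. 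The hypothesis that the two simplexes share no face has not obviously entered my sketch; I would expect the author to use it, at least implicitly, precisely to rule out such degeneracies (for $n=2$ it directly prevents $\ell_i = \ell_j$, since that would mean the edge $A_iA_j$ coincides with the opposite-face line $B_iB_j$), and verifying this is the subtle bookkeeping step I would pay the most attention to when filling in the proof.
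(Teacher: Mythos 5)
Your argument is correct, but it takes a genuinely different route from the paper's. The paper fixes the point $V=A_1B_1\cap A_2B_2$, shows that the triangles $A_1A_2A_3$ and $B_1B_2B_3$ \emph{cannot} be coplanar --- by iterating the coplanarity upward until it would force the two simplexes to share a face, contradicting the hypothesis --- and then invokes the converse of Desargues for two non-coplanar triangles (Coxeter 2.31) to conclude that $A_3B_3$ passes through $V$. You instead avoid the spatial Desargues theorem entirely: your lemma that three pairwise-meeting distinct lines either concur or are coplanar, combined with choosing a fourth vertex $A_m$ off the common plane so that $\ell_m$ meets that plane in a single point, forces concurrence directly. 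Your route is more elementary for $n\ge 3$, and it reveals something the paper's proof obscures: for $n\ge 3$ the no-common-face hypothesis is not actually needed for \emph{this} theorem (it is needed later, for the statements about intersections of corresponding $(n-1)$-spaces); what you do need is that each pair of corresponding edges are \emph{distinct} lines meeting in a single point, which is exactly how the paper reads the hypothesis (``A priori, a pair of corresponding edges might be the same line or they might be skew. This is ruled out by the hypothesis.''), and which in turn guarantees $\ell_i\ne\ell_j$ for all $i\ne j$ --- resolving the degeneracy you flag at the end. The one place where you and the paper necessarily coincide is the base case $n=2$, where both appeal to the classical planar converse of Desargues. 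The paper's version buys a tighter integration with the rest of the development (the non-coplanarity iteration is reused in the proof of Theorem~\ref{theorem:simplexesIntersection}), while yours buys self-containedness and a slightly more general statement.
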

\begin{proof}
A priori, a pair of corresponding edges might be the same 
line or they might be skew. This is ruled out by the hypothesis. 
The result holds for $n=2$ by the planar Desargues theorem.
We assume that $n\ge 3$.  
By assumption, the edges $A_1A_2$ and $B_1B_2$ are 
distinct and meet in a point. Thus the lines 
$A_1B_1$ and $A_2B_2$ meet in a point $V$. 
Let $A_3,B_3$ be any remaining pair of corresponding points.
We claim that triangles $A_1A_2A_3$ and $B_1B_2B_3$ are not coplanar. 
For suppose they lie in plane $\pi$. Let $C_4$ be the point 
of intersection of lines $A_3A_4$ and $B_3B_4$.  
If $C_4$ is in $\pi$ then $A_4$ is in $\pi$ [as is $B_4$].  
This contradicts the simplex property.
Thus the sets of points $\{A_1,A_2,A_3,A_4\}$ and 
$\{B_1,B_2,B_3,B_4\}$ both lie in the same 3-dimensional 
space generated by $A_1,A_2,A_3,C_4$.

Iterating this we end up with two corresponding faces 
namely 
$$\{A_1,A_2,\ldots,A_n\}\hbox{ and }
\{B_1,B_2,\ldots,B_n\}$$ 
lying in the same $(n-1)$-space. 
Then $A,B$ have a common face. But this contradicts the hypothesis. 
Thus the triangles are not coplanar and lie in two different planes. 

Since the line $A_i A_j$ intersects the line $B_iB_j$, 
for $j=1,2,3$ the triangles are perspective from the line of 
intersection of the two planes. Thus the triangles 
$A_1A_2A_3$ and $B_1B_2B_3$ are in perspective from 
a point~\cite[2.31]{coxeter}.  
Therefore $V$ lies on $A_3B_3$.
But $A_3$, $B_3$ are an arbitrary pair of corresponding points. 
Thus all lines $A_i B_i$ pass through $V$, for $i=1,2,\ldots,n,n+1$.
\end{proof}

\begin{lemmaN}
The point $P= P_{ij}=A_iA_j \cap B_iB_j$ is distinct 
from the point $P_{rs}$, defined similarly 
with $r,s$ replacing $i,j$ unless $\{i,j\}= \{r,s\}$.
  \label{lemma:pijDistinctFromPrs}
\end{lemmaN}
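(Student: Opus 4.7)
My plan is to argue by contradiction, supposing $P_{ij} = P_{rs}$ with $\{i,j\} \neq \{r,s\}$, and split according to whether the two index pairs share one element or none (they cannot share two since they are distinct $2$-subsets).

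First I would handle the case where the pairs share exactly one index, say $i = r$ with $j \neq s$. The alleged common point $P$ lies on both edges $A_iA_j$ and $A_iA_s$ of simplex $A$. By the simplex property stated in Section~\ref{section:preliminaries} (any three vertices are non-collinear), these two distinct lines meet only at $A_i$, forcing $P = A_i$. Running the identical argument on the $B$-side gives $P = B_i$. Hence $A_i = B_i$, which contradicts the hypothesis that $A$ and $B$ share no point.

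Next I would handle the disjoint-index case, which can only occur for $n \ge 3$; say $\{i,j\} = \{1,2\}$ and $\{r,s\} = \{3,4\}$. Then $P$ lies on both lines $A_1A_2$ and $A_3A_4$. But $\{A_1,A_2,A_3,A_4\}$ is a subset of a simplex, so by the definition of simplex these four points span a $3$-dimensional subspace; consequently the two edges $A_1A_2$ and $A_3A_4$ are skew inside that $3$-space and cannot share a point, a contradiction. For $n = 2$ every two distinct $2$-subsets of $\{1,2,3\}$ share an index, so the first case already covers the lemma.

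The main obstacle, such as it is, is purely bookkeeping: one must invoke the general-position consequences of the simplex definition correctly in each case — non-collinearity of any three vertices to collapse the one-shared case, and the $3$-dimensional span of any four vertices to force skewness in the disjoint case. Once these two elementary facts are cited, each case closes in a single line, so I do not expect any genuine geometric difficulty.
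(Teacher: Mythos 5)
Your proof is correct and follows essentially the same route as the paper's: a two-case split according to whether the index pairs share one element or none, with each case closed by the general-position properties of a simplex (non-collinearity of any three vertices, and the fact that any four vertices span a $3$-space so that the two edges cannot be coplanar). The only cosmetic difference is in the shared-index case, where you conclude $P = A_i = B_i$ and invoke the no-common-point hypothesis, whereas the paper concludes directly that $A_i, A_j, A_s$ would be collinear; your version is if anything slightly more careful, since it also covers the possibility $P = A_i$.
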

\begin{proof}
If $i=r$ and $j$ is not equal to $s$ then the points 
$A_i,A_j,A_s$ are collinear contradicting the fact 
that they are part of a simplex.
If $\{i,j,r,s\}$ consists of 4 distinct numbers 
this means that the 4 points $A_i, A_j, A_r,A_s$ all 
lie in the plane containing $P$ and the 2 lines $A_i A_j$ 
and $A_r A_s$. But this contradicts the fact that the 4 points 
form a simplex if $n=3$ or partial simplex for $n=4, 5,\ldots$.
\end{proof}

In what follows $\alpha_k$, $\beta_k$ are corresponding faces of $A,B$.

\begin{theorem}
  \label{theorem:simplexesIntersection}
Let $A,B$ denote simplexes in the $n$-dimensional space $\Sigma_n$ as in 
Theorem~\ref{theorem:simplexesInPersp}.
Then
\begin{enumerate}[a.]
  \item
  \label{theorem:simplexesIntersectionParta}
The intersections of corresponding $t$-spaces are $(t-1)$-spaces,
for $t=1,2,\ldots,n-1$.
  \item
  \label{theorem:simplexesIntersectionPartb}
The spaces $(\alpha_i\cap \alpha_j)$ with $(\beta_i \cap \beta_j)$
generate an $(n-1)$ space, a hyperplane of 
$\Sigma_n$,
where $i,j$ are distinct.
\end{enumerate}
\end{theorem}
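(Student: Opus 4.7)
The plan is to deduce both parts from one application of the projective dimension formula to joins of corresponding subspaces, after first pinning down the location of the perspective vertex $V$.

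The preliminary step, which I expect to be the main obstacle, is to show that $V$ lies on no face of $A$. Suppose on the contrary that $V$ lies in the hyperplane $\langle A_l : l \neq k\rangle$ for some index $k$. Then for every $l \neq k$ the line $A_l B_l = V A_l$ is contained in this hyperplane, so each such $B_l$ lies in it as well. Consequently $\langle B_l : l \neq k\rangle \subseteq \langle A_l : l \neq k\rangle$, and since both are hyperplanes they coincide --- contradicting the hypothesis of Theorem~\ref{theorem:simplexesInPersp} that $A$ and $B$ share no face. It follows that whenever $S \subseteq \{1,\ldots,n+1\}$ has $|S| \le n$, choosing any $k \notin S$ gives $\langle A_S\rangle \subseteq \langle A_l : l \neq k\rangle$ and hence $V \notin \langle A_S\rangle$; the same conclusion holds for $B$ by symmetry.

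For part (a), I fix $S$ with $|S| = t+1$ and $1 \le t \le n-1$, and write $U_A = \langle A_i : i \in S\rangle$, $U_B = \langle B_i : i \in S\rangle$, each of projective dimension~$t$. Because $V$ lies on every line $A_iB_i$, we have $\langle U_A, U_B\rangle = \langle U_A, V\rangle$, and by the preliminary step $V \notin U_A$, so this join has dimension $t+1$. The dimension formula then yields
\[
\dim(U_A \cap U_B) \;=\; \dim U_A + \dim U_B - \dim\langle U_A, U_B\rangle \;=\; 2t - (t+1) \;=\; t-1,
\]
which is exactly part (a).

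Part (b) is then immediate by specialization: apply (a) with $t = n-2$ to the $(n-1)$-element set $S = \{1,\ldots,n+1\} \setminus \{i,j\}$, so that $\langle A_S\rangle = \alpha_i \cap \alpha_j$ and $\langle B_S\rangle = \beta_i \cap \beta_j$, both of dimension $n-2$. By (a) their intersection has dimension $n-3$, and the dimension formula gives $(n-2)+(n-2)-(n-3) = n-1$ for the dimension of the join, which is the desired hyperplane. Once the preliminary step is in hand, everything else is routine dimension counting; the substantive content is in translating ``no shared face'' into the precise statement that $V$ lies outside every face of each simplex.
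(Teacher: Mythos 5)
Your overall strategy --- computing $\dim(U_A\cap U_B)$ directly from the dimension formula after identifying $\langle U_A,U_B\rangle$ with $\langle U_A,V\rangle$ --- is genuinely different from the paper's proof, which establishes part (a) by induction on $t$: it uses Lemma~\ref{lemma:pijDistinctFromPrs} to see that distinct pairs of corresponding lower-dimensional subspaces give distinct intersections, and then squeezes the dimension of the join from both sides. Where your argument applies it is shorter and cleaner, and part (b) is handled identically in both.

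However, your preliminary step has a real gap. The deduction ``$V\in\langle A_l: l\neq k\rangle$ forces $B_l\in\langle A_l: l\neq k\rangle$ for all $l\neq k$'' rests on identifying the perspectivity line through $A_l$ and $B_l$ with the line $VA_l$, which tacitly assumes $V\neq A_l$. Nothing in the hypotheses of Theorem~\ref{theorem:simplexesInPersp} rules this out, and it can actually occur: for $n=3$ take $A=\{e_1,e_2,e_3,e_4\}$, put $B_j=e_1+e_j$ on the edge $A_1A_j$ for $j=2,3,4$, and set $B_1=e_1+e_2+e_3+e_4$ (characteristic not~$2$). Then $A,B$ are simplexes with no common point or face, all six pairs of corresponding edges meet, and the unique centre of perspectivity is $V=A_1$, which lies on three faces of $A$. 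So the conclusion of your preliminary step is false in general, and with it the key identity $\langle U_A,U_B\rangle=\langle U_A,V\rangle$ fails for every $S$ containing the index~$1$: there $V\in U_A$, so $\langle U_A,V\rangle=U_A$, while $B_1\notin U_A$. (The theorem itself still holds in this example, but your argument does not establish it for those $S$.) In fairness, the paper asserts the same faulty equivalence --- ``no common face'' versus ``$V$ on no face'' --- in Theorem~\ref{theorem:twoSimplexesNoPtNoFaceArcHyperplane}, but its inductive proof of the present theorem does not depend on it. Your proof becomes correct, and arguably preferable, once you either add the hypothesis $V\notin A\cup B$ or treat that degenerate case separately.
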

\begin{proof}[Proof of a.]
The result holds, by hypothesis, for $t=1$.
Let $t=2$. We have 2 triangles $A_1,A_2,A_3$ and $B_1,B_2,B_3$.
Each edge of the triangle from $A$ meets the corresponding edge 
from $B$ in a point. The line of intersection of the planes of 
the triangles is a line containing 3 distinct 
points $P_{ij}$, $1\le i,j \le 3$.

Let $t=3$.  We have two tetrahedra $A,B$ with 4 pairs of corresponding faces. 
Each face contains a triangle. Each corresponding pair of faces 
intersect in a line. 
From Lemma~\ref{lemma:pijDistinctFromPrs}
no two of the lines are the same. 
Thus, for $t=3$, the dimension of the 
intersection is at least two.
To establish the result we induct on $t$.  
Assume, by induction, that the intersection of two $t$-spaces 
has dimension $t-1$.  The intersection of two 
corresponding $(t+1)$-spaces contains the union of 
the intersections of corresponding pairs of $t$-spaces. 
Each such pair intersect in a $(t-1)$-space. 
Distinct pairs yield distinct intersections from 
Lemma~\ref{lemma:pijDistinctFromPrs}.
Thus the dimension of the intersection is at least~$t$. 
From the dimension formula the corresponding union 
has dimension at most $t+1$.  
From the argument in the proof of
Theorem~\ref{theorem:simplexesInPersp}
the dimension of that union of the two subspaces is at least $t+1$. 
We conclude that the dimension of the union is $t+1$, 
and that of the intersection is $t-1$, for all 
values of $t$ between~1 and $n-1$.
\end{proof}
\begin{proof}[Proof of b.]
$Dim \langle (\alpha_i\cap \alpha_j)\rangle$ 
and $Dim \langle(\beta_i\cap \beta_j) \rangle$ 
are $n-2$ as each face is generated by $n-1$ points of $A,B$.
From part~a the dimension of their intersection is $n-3$.  
From the dimension formula the dimension of their union is $n-1$.
\end{proof}

\begin{theorem}
  \label{theorem:twoSimplexesFourParts}
Let $A,B$ denote two simplexes in perspective from a 
vertex $V$ in $\Sigma_n$ which have no common points or faces. 
Then 
\begin{enumerate}[a.]
\item
The intersection points of corresponding edges form a set $S$ of 
$n+1 \choose 2$ 
distinct points in $\Sigma_n$. 
None of these points lie in~$A$ or~$B$ or are equal to~$V$.
\item
The intersection of corresponding $t$-spaces of $A,B$ 
is a $(t-1)$-space in $\Sigma_n$ for $t=1,2,\ldots,n-1$.
\item
The intersection of two corresponding faces of $A,B$ 
is an $(n-2)$-space which lies in a fixed hyperplane $v$ of $\Sigma_n$.
\item
In particular the set $S$ lies in $v$.
\end{enumerate}
\end{theorem}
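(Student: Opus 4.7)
The plan is first to upgrade the hypothesis ``perspective from $V$ with no common face'' to the stronger hypothesis of Theorem~\ref{theorem:simplexesIntersection}, namely that each corresponding edge pair consists of two distinct lines meeting in a point. Meeting is immediate: since $B_i\in VA_i$ and $B_j\in VA_j$, the lines $A_iA_j$ and $B_iB_j$ both lie in the plane $\langle V,A_i,A_j\rangle$ and hence meet. For distinctness I argue by contradiction: if $A_iA_j=B_iB_j$, then $V\in A_iA_j$, and for any $k\neq i,j$ the face $\alpha_k$ contains $A_i,A_j,V$, hence every line $VA_s$ for $s\neq k$, hence every $B_s$ for $s\neq k$; this forces $\beta_k\subseteq\alpha_k$, and equality by dimensions produces a common face, contradicting the hypothesis. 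The same argument shows that $V$ lies on no edge $A_iA_j$.

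Parts~(a) and~(b) follow quickly. For~(a), Lemma~\ref{lemma:pijDistinctFromPrs} supplies $\binom{n+1}{2}$ distinct intersection points $P_{ij}$; putting $P_{ij}=A_k$ or $P_{ij}=B_k$ would place three simplex points on a line, contradicting the simplex property; and $P_{ij}\neq V$ because $V$ avoids every edge. Part~(b) is Theorem~\ref{theorem:simplexesIntersection}~(a) applied to the hypothesis we have just verified.

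I would handle~(c) and~(d) together by exhibiting the hyperplane explicitly: set $v:=\langle P_{12},P_{13},\ldots,P_{1,n+1}\rangle$. Since $P_{1k}\in A_1A_k$ is distinct from both $A_1$ and $A_k$, one has $A_k\in\langle A_1,P_{1k}\rangle$, so $\langle A_1,v\rangle\supseteq\langle A_1,A_2,\ldots,A_{n+1}\rangle=\Sigma_n$; this forces $\dim v\geq n-1$, while $v$ is spanned by $n$ points, so $\dim v\leq n-1$. Hence $v$ is a hyperplane and $A_1\notin v$. For any $P_{ij}$ with $i,j\neq 1$, the triangles $A_1A_iA_j$ and $B_1B_iB_j$ are in perspective from $V$, so the planar Desargues theorem (coplanar case) or \cite[2.31]{coxeter} (non-coplanar case) gives that $P_{1i},P_{1j},P_{ij}$ are collinear; since $P_{1i},P_{1j}\in v$, I conclude $P_{ij}\in v$, proving~(d). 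For~(c), each $\alpha_k\cap\beta_k$ has dimension $n-2$ by~(b); fixing $l\neq k$, the same adjoin-$A_l$ count shows that $\{P_{lm}:m\neq k,l\}$ spans a subspace of dimension exactly $n-2$ inside $\alpha_k$, which must then coincide with $\alpha_k\cap\beta_k$ and is contained in $v$.

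The main obstacle is the bootstrap step: carefully using ``no common face'' to rule out $V\in A_iA_j$ and the resulting coincidence of corresponding edges, because without this no prior result in the paper applies. Once these degeneracies are excluded, the rest is a combination of Lemma~\ref{lemma:pijDistinctFromPrs}, Theorem~\ref{theorem:simplexesIntersection}, the Desargues collinearity for triples of corresponding points, and a single uniform dimension count.
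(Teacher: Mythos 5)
Your proof is correct, and for parts (a) and (b) it follows the paper's route (reduce to the hypothesis of Theorem~\ref{theorem:simplexesIntersection}, then quote Lemma~\ref{lemma:pijDistinctFromPrs} and Theorem~\ref{theorem:simplexesIntersection}(a)); in fact your bootstrap is more careful than the paper's, which attributes the distinctness of corresponding edges to ``no common points'' alone and only states the equivalence between ``no common face'' and ``$V$ on no face'' later, without proof --- your explicit argument that $V\in A_iA_j$ would force $\beta_k\subseteq\alpha_k$ fills that in. Where you genuinely diverge is in parts (c) and (d). The paper proves (c) first by a duality argument: it dualizes Theorem~\ref{theorem:simplexesInPersp} (points $\leftrightarrow$ faces, so that ``corresponding edges meet in points'' becomes ``the joins $\langle\alpha_i\cap\alpha_j,\beta_i\cap\beta_j\rangle$ are hyperplanes,'' which is supplied by Theorem~\ref{theorem:simplexesIntersection}(b)), concluding that all the $(n-2)$-spaces $\alpha_k\cap\beta_k$ lie in a common hyperplane $v$; part (d) then falls out because each $P_{ij}$ lies in several such intersections. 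You instead construct $v$ explicitly as $\langle P_{12},\ldots,P_{1,n+1}\rangle$, verify it is a hyperplane by the adjoin-$A_1$ dimension count, prove (d) directly via the Desargues collinearity of $P_{1i},P_{1j},P_{ij}$, and then recover (c) by showing $\alpha_k\cap\beta_k$ is spanned by the points $P_{lm}$ with $l,m\neq k$, which all lie in $v$. Your version is more constructive and self-contained (it never invokes the principle of duality and identifies $v$ concretely), at the cost of an extra dimension count; the paper's version is shorter but leans on the reader accepting the dual statement of Theorem~\ref{theorem:simplexesInPersp}. Both are sound; the only nitpick is your one-line claim in (a) that $P_{ij}=A_k$ ``places three simplex points on a line'' --- for $k\in\{i,j\}$ the three collinear points come from both simplexes, and you need your already-established facts (corresponding edges distinct, $V$ on no edge) to finish that subcase.
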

\begin{proof}
 Since $A,B$ have no common points the lines $A_iB_i, A_jB_j$, 
for distinct $i,j$, exist, are distinct and contain $V$. 
It follows that lines $A_i A_j$ and $B_i B_j$ intersect in a 
unique point which cannot be $V$ and
which does not lie in~$A$ or~$B$. From 
Lemma~\ref{lemma:pijDistinctFromPrs},
$S$ contains ${n+1 \choose 2}$ distinct points. 
This proves part~a.

Since corresponding edges meet in a point,
part~b follows from 
Theorem~\ref{theorem:simplexesIntersection}
part~\ref{theorem:simplexesIntersectionParta}

The dual of 
Theorem~\ref{theorem:simplexesInPersp}
asserts that the intersection of pairs of corresponding 
faces of $A, B$ lies in a fixed hyperplane $v$ provided that 
Theorem~\ref{theorem:simplexesIntersection},
part~\ref{theorem:simplexesIntersectionPartb},
holds. 
Since 
Theorem~\ref{theorem:simplexesIntersection}
applies, in particular, to simplexes $A, B$ in perspective 
from a point this proves 
part~c. 
Each point of $S$ lies in the intersection of (several pairs of) 
corresponding faces of $A, B$. This proves part~d.
\end{proof}

We have now shown a (strong) converse to 
Theorem~\ref{theorem:twoSimplexesFourParts},
as follows.
\begin{theorem}
Let $A,B$ be simplexes which do not share a common point or face. 
Assume that there is a correspondence between points $A_i, B_i$ 
such that lines $A_i A_j$, $B_i B_j$ meet in a point. 
Then $A,B$ are perspective from a point and a, b, c, d of 
Theorem~\ref{theorem:twoSimplexesFourParts},
hold.
\end{theorem}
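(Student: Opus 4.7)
The plan is to assemble the two preceding theorems with essentially no new work. My first observation is that the hypothesis of the present statement is identical, word for word, to the hypothesis of Theorem~\ref{theorem:simplexesInPersp}: simplexes $A, B$ in $\Sigma_n$ that share neither a point nor a face, together with a correspondence $A_i \leftrightarrow B_i$ under which every pair of corresponding edges $A_iA_j$, $B_iB_j$ meets in a point. So the first step is simply to invoke Theorem~\ref{theorem:simplexesInPersp} to produce a vertex $V$ from which $A$ and $B$ are in perspective.

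Once $V$ has been extracted, I would verify that the hypotheses of Theorem~\ref{theorem:twoSimplexesFourParts} now hold: we have perspectivity from a point (just obtained), and the ``no common point or face'' assumption is part of the standing hypothesis of the present theorem and is thus preserved. A direct application of Theorem~\ref{theorem:twoSimplexesFourParts} then delivers parts~a, b, c, d in one stroke.

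There is no additional computation to perform; the content of the theorem is really organizational. It asserts that the single, purely combinatorial input ``corresponding edges meet pairwise'' is strong enough to force not merely the existence of the vertex $V$ (the classical converse direction) but also the entire intersection structure: the $(t-1)$-dimensional intersections of corresponding $t$-spaces and the hyperplane $v$ containing all of them. Because of this bundling, the only point requiring any care is to check that nothing in the passage from ``edges meet'' to ``perspective from $V$'' consumes the no-common-face assumption, so that it remains available when applying Theorem~\ref{theorem:twoSimplexesFourParts}; inspecting the proof of Theorem~\ref{theorem:simplexesInPersp} confirms that the assumption survives intact. Consequently there is no real obstacle, and the theorem follows as a clean corollary of the two results it names.
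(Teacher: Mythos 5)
Your proposal is correct and matches the paper's own treatment exactly: the paper offers no separate proof, introducing this theorem with ``We have now shown a (strong) converse\ldots'' and treating it as the immediate concatenation of Theorem~\ref{theorem:simplexesInPersp} (edges meeting pairwise forces perspectivity from a vertex $V$) with Theorem~\ref{theorem:twoSimplexesFourParts} (perspectivity from a point plus no shared point or face yields a, b, c, d). Your added check that the no-common-face hypothesis is not consumed en route is a reasonable, if minor, point of care.
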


%
%
%
\section{Arcs and coordinate systems}
\label{section:ArcsAndCoordSystems}

In an $n$-dimensional projective space $\Sigma_n$ a 
{\em coordinate system} is a set of $n+2$ points such that any 
subset of size $n+1$ is a simplex~\cite{hirschfeld}.
A simplex and a coordinate system are examples of arcs.
An {\em arc} in $\Sigma_n$ is a set of $m$ points, with $m$ at least $n+1$, 
with the property that every subset of size $n+1$ forms a simplex. 
This implies that any subset of the arc of size $t$ generates a 
subspace of dimension $t-1$ when $t$ is at most $n+1$.  
If the underlying field is finite, of order $q$, it is conjectured that, 
if $n \le q$, the maximum size of an arc in $\Sigma_n$ 
is $q+1$ if $q$ is odd $q$ or $q+2$ if $q$ even: 
see~\cite{BruenThasBlokhuis}.

\begin{theorem}
For each $n$ there exists a coordinate system $\Gamma_{n+2}$ 
in $\Sigma_n$ such that no point lies in a given hyperplane $H$ 
of $\Sigma_n$ so long as the underlying field $F$ has order greater than~2.
\end{theorem}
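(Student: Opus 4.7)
The plan is to construct $\Gamma_{n+2}$ explicitly, after first normalising the position of $H$. A projective change of coordinates lets me assume that $H$ is the hyperplane $\{x_{n+1}=0\}$, so every point of $\Sigma_n\setminus H$ has a homogeneous representative of the form $(v_1,\ldots,v_n,1)$.

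Within this affine picture, the candidate I would try is
\[
\Gamma_{n+2}=\{P_0,P_1,\ldots,P_n,P_{n+1}\},
\]
with $P_0=(0,\ldots,0,1)$, $P_i=(0,\ldots,0,1,0,\ldots,0,1)$ (the first $1$ in position $i$) for $1\le i\le n$, and $P_{n+1}=(a_1,\ldots,a_n,1)$ for scalars $a_i\in F$ yet to be fixed. Every $P_k$ has last coordinate $1$, so by construction none of them lies in $H$.

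All that remains is to pick the $a_i$ so that any $n+1$ of these $n+2$ points are linearly independent as vectors in $F^{n+1}$. The next step is to compute, in turn, the $n+2$ relevant $(n+1)\times(n+1)$ determinants. Deleting $P_{n+1}$ yields a nonzero constant from a permutation-type matrix; deleting $P_0$ yields, up to sign, $1-(a_1+\cdots+a_n)$ by a block-determinant evaluation; and for $1\le j\le n$, deleting $P_j$ and expanding along the $j$-th column yields $\pm\, a_j$. Hence $\Gamma_{n+2}$ is a coordinate system exactly when each $a_j$ is nonzero and $a_1+\cdots+a_n\ne 1$.

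The step where the hypothesis $|F|>2$ enters, and the only real obstacle, is the existence of such an $(a_1,\ldots,a_n)$. The cleanest way to finish is to take $a_1=\cdots=a_n=c$, reducing the two constraints to $c\ne 0$ and $nc\ne 1$; that is, $c$ must avoid at most two prescribed elements of $F$ (only the element $0$ when $n$ vanishes in $F$). Such a $c$ exists as soon as $|F|\ge 3$, which is precisely the hypothesis.
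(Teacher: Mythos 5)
Your proof is correct and follows essentially the same route as the paper: an explicit frame with one free parameter, chosen so as to avoid at most two elements of $F$ (which is exactly where $|F|>2$ enters), combined with a collineation identifying a model hyperplane with the given $H$. The only difference is cosmetic --- the paper keeps the standard frame, takes the last point to be $(1,1,\ldots,1,z)$ off the hyperplane $x_1+\cdots+x_{n+1}=0$, and applies the collineation at the end, whereas you normalise $H$ to $x_{n+1}=0$ first and then verify the frame condition by direct determinant computations.
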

\begin{proof}
Using homogeneous coordinates $(x_1,x_2,\ldots x_n, x_{n+1})$ let $K$
be the hyperplane with equation $x_1+x_2+x_3+\cdots+x_n+ x_{n+1}=0$. 
We choose the first $n+1$ points $P_i$ of the arc $\Gamma_{n+2}$ to 
have 1 in position $i$ and zeros elsewhere, $i=1,2,3,\ldots,n, n+1$.  
These points form a simplex. Point number $n+2$ is $(1,1,1,\ldots,1,z)$,
 where $z$ is any non-zero number in $F$. This point 
is off $K$ provided $n +1+z$ is non- zero. This can always be done 
if $F$ has order greater than~2.  These $n+2$ points form a coordinate system 
with no point in $K$.
Finally we use a collineation of $\Sigma_n$ mapping $K$ to the 
given hyperplane $H$. This will map the set of $n+2$ points 
above to a coordinate system having none of its points on $H$.
\end{proof}

\begin{remarkN}
If $F$ has order~2 then 
Theorem~\ref{theorem:simplexesInPersp}
is false. To see this let $n=3$. Let $W$ be a 5-arc in 
$\Sigma_3=PG(3,2)$. Suppose the result holds. 
Then as in~\cite{BruenBruenMcQuillan}
the section by $PG(2,2)$ yields a configuration with 10 points. 
But $PG(2,2)$ only has 7 points.
\end{remarkN}

Henceforth we assume that $F$ has order greater than~2.

%
%
%
\section{The section of a coordinate system in $\Sigma_{n+1}$ by a 
hyperplane $H=\Sigma_n$.}
\label{section:secCoordSysByHyperplane}

We start with $\Sigma_n$ embedded as a hyperplane $H$ in $\Sigma_{n+1}$. 
Let $\Gamma=\Gamma_{n+3}$ denote a coordinate system in 
$\Sigma_{n+1}$ consisting of $n+3$ points $1,2,\ldots,n+2, n+3$ 
where none of these points lie in the hyperplane $H$. 
By joining pairs of points of $\Gamma$ we 
generate ${n+3 \choose 2}$ lines in $\Sigma_{n+1}$. 
The section of the line joining points $i,j$ of the 
arc by $H$ is the point denoted by the unordered pair $(i,j)$. 
All told this yields ${n+3\choose 2}$ points in $H$, with $i,j$ 
lying between 1 and $n+3$.

We claim that these points are distinct. From the arc property 
no 3 points of $\Gamma$ are collinear. 
Further, suppose that the 
line joining~1 to~2 meets the line joining~3 to~4 say in 
the same point of~$H$. Then the 4 points $1,2,3,4$ lie in a plane, 
which is forbidden by the arc property.

A triangle formed from 3 points of $\Gamma$, say $\{1,2,3\}$ 
has as its section by $H$ three collinear points $(1,2),(1,3),(2,3)$. 
In general, points $(i,j)$ and $(k,l)$ lie on a line in $H$ 
if and only if the points have a symbol in common, in which case 
they lie on a line with 3 points.

\begin{exampleN}
\label{example:completeQuadrilateral}
Suppose we section the figure formed from 4 of the 6 points in~$\Gamma$, 
say the points in $S=\{1,2,3,4\}$, by $H$. 
These points form a tetrahedron with 4 points, 
6 lines and 4 planes in $\Sigma_3$.
The section of the figure has 
${4 \choose 2}$, i.e, 
6 points and 4 lines. These lines come from the 4 triangles 
formed by the 4 triples in $S$. 
Any pair of triples share a pair of points.
Thus any 2 of these 4 lines in $H$ meet in a point, 
and no 3 lines are concurrent. 
This figure lies in a plane in H and is a 
complete quadrilateral \cite[p.\ 7]{coxeter}, \cite[p.\ 7]{lord}
If we project the tetrahedron to $H$ from a general point of 
$\Sigma_3$ we end up with a planar figure having, 
dually, 6 lines and 4 points which is known as a 
complete quadrangle \cite[p.\ 7]{coxeter}, \cite[p.\ 18]{lord}.

If we project just the points in the set $\{2,3,4\}$ 
from the point~1 to $H$ we obtain a triangle with vertices 
$(1,2),(1,3),(1,4)$.
From these 3 points we generate 3 more,
namely $(2,3),(2,4),(3,4)$.
Thus we end up with the same set of six points as the section of 
$\{1,2,3,4\}$ by $H$.
\end{exampleN}

\begin{theorem}
  \label{theorem:t+1arcGamman+3} 
Let $S$ denote a subset of size $t+1$ of an 
arc $\Gamma_{n+3}$ in $\Sigma_{n+1}$.  Then
\begin{enumerate}[a.]
  \item
  $S$ generates a subspace of dimension $t$ in the space 
  $\Sigma_{n+1}$.
  \item
  The section of $\langle S\rangle$ by the hyperplane 
  $H=\Sigma_n$ generates a subspace $M$ of dimension $t-1$ in $H$. 
  $M$ is also generated by the projection of $S$ to $H$ from 
  any point of $S$.
\end{enumerate}
In summary the section of (the space generated by) a set $S$ 
of $t+1$ points of the $(n+3)$-arc in 
$\Sigma_{n+1}$ by $H$ is a space of dimension $t-1$ in 
$H=\Sigma_n$.
\end{theorem}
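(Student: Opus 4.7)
The plan is to deduce both parts from the projective dimension formula recalled in Section~\ref{section:preliminaries}. Part~a follows directly from the arc property of $\Gamma_{n+3}$: because every $(n+2)$-subset is a simplex in $\Sigma_{n+1}$, any sub-subset of size $t+1\le n+2$ spans a subspace of dimension exactly~$t$.

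For part~b, the main step is to apply the dimension formula to $E=\langle S\rangle$ and $F=H$. The crucial input is the identity $\langle E,F\rangle=\Sigma_{n+1}$, which holds because no point of $\Gamma$ lies in $H$; in particular $E\not\subseteq H$, so the join of $E$ with the hyperplane $H$ exhausts the ambient $(n+1)$-space. Substituting $\dim E=t$, $\dim F=n$ and $\dim\langle E,F\rangle=n+1$ into
$$
\dim(E\cap F)=\dim E+\dim F-\dim\langle E,F\rangle
$$
yields $\dim(\langle S\rangle\cap H)=t-1$, which is the first assertion.

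To identify this $(t-1)$-space with the span of a projection, fix any $P\in S$ and set $Q'=PQ\cap H$ for each of the $t$ remaining points $Q\in S\setminus\{P\}$; each $Q'$ is well defined because $P\notin H$ and each lies in $\langle S\rangle\cap H$. Since $Q\in\langle P,Q'\rangle$ for every such $Q$, the set $\{P,Q_1',\ldots,Q_t'\}$ generates $\langle S\rangle$ and therefore has projective dimension~$t$. Because $P$ lies off $H$ while all the $Q'$ lie in $H$, adjoining $P$ to $\{Q_1',\ldots,Q_t'\}$ raises the dimension by exactly~$1$; this forces $\dim\langle Q_1',\ldots,Q_t'\rangle=t-1$, so the projections span all of $\langle S\rangle\cap H$.

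I do not anticipate any real obstacle beyond careful bookkeeping with the dimension formula, in particular the empty-intersection convention $\dim\emptyset=-1$ flagged in Section~\ref{section:preliminaries}. The instance $t=3$ is already the complete quadrilateral worked out in Example~\ref{example:completeQuadrilateral}, and the general argument is merely the abstraction of the pattern visible there.
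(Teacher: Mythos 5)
Your proof is correct and is essentially the paper's own argument: establishing that the projections from a point $P\in S$ span a $(t-1)$-space by adjoining the off-$H$ point $P$ to recover $\langle S\rangle$ is precisely the paper's ``alternative proof of b''. The only real difference is that you get $\dim(\langle S\rangle\cap H)=t-1$ directly from the dimension formula applied to $E=\langle S\rangle$ and $F=H$ (using $E\not\subseteq H$), whereas the paper's first proof instead shows that every section point $(i,j)$ already lies in the span $M$ of the projections by chaining through the collinear triples $(1,i),(1,j),(i,j)$; both routes are valid and deliver the same conclusion.
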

\begin{proof}
Part~a follows from the definition of an arc.

For Part~b, let 
$S=\{1,2,3,\ldots,t, t+1\}$.
Denote by $L$ the subspace generated by 
$\{2,3,\ldots,t, t+1\}$.
$L$ has dimension $t-1$.
Projecting
$L$ from the point~1,
which is not in $L$, we obtain a $(t-1)$-dimensional space $M$ 
generated by the $t$ points $(1,2),(1,3),\ldots,(1,t), (1,t+1)$ 
in $\Sigma_n$. Since $(1,2),(1,3)$ are in $M$ 
the point $(2,3)$ on the line joining them is also in $M$. 
Then the points $(2,1)$ [$=(1,2)$], $(2,3), (2,4),\ldots,(2,t), (2,t+1)$ 
are in $M$. Proceeding, we see that $M$ contains all points 
$(i,j)$ where $i, j$ lie between 1 and $t+1$ so $M$ 
is generated by the section of $\langle S\rangle$. 

We generate the same space $M$ by using any point in $S$ 
instead of the point~1.

Alternative proof of~b. 
As shown in 
Example~\ref{example:completeQuadrilateral}
for the case $n=3$,
$M$ is generated by
$\{(1,2), \ldots, (1,t), (1,t+1)\}$, and has 
dimension $x$ say. 
Since the point~1 is not in 
$\Sigma_n$ it is not in $M$. Thus 
$K=\langle 1,M\rangle$ has dimension $x+1$.
Since $K$ contains points 1 and $(1,j)$ it contains 
the line joining them and so the point $j$ for 
$j=2,3,\ldots,t, t+1$. $K$ also has the point 1.
Thus $K$ contains $t+1$ arc points and has dimension $t$. 
Therefore $t=x+1$, so $x=t-1$. 
Thus $M$, the section of the $(t+1)$-set $S$ of the arc 
by $H$ generates a space of dimension $t-1$.
\end{proof}

From the above the section of the space generated by a 
$(t+1)$-set of the arc $\Gamma$ by $H$ is a 
$(t-1)$-space.
We also have the following result.

\begin{theorem}
Given the arc $\Gamma$ with $n+3$ points in 
$\Sigma_{n+1}$ the section by the hyperplane 
$H=\Sigma_n$ has ${n+3\choose 2}$ points, ${n+3\choose 3}$ lines,
${n+3\choose 4}$ planes, $\ldots$, and 
${n+3\choose n+1}$ spaces of dimension $n-1$
i.e. hyperplanes in $H=\Sigma_n$.
\end{theorem}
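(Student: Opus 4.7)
The plan is to associate each subspace counted in the enumeration with a subset of the arc $\Gamma$. By Theorem~\ref{theorem:t+1arcGamman+3}, for $1 \le t \le n$, any $(t+1)$-subset $S \subseteq \Gamma$ generates a $t$-space $\langle S \rangle \subseteq \Sigma_{n+1}$ whose section $M_S = \langle S \rangle \cap H$ is a $(t-1)$-dimensional subspace of $H$. There are $\binom{n+3}{t+1}$ such subsets. Setting $k = t-1$ we obtain $\binom{n+3}{k+2}$ candidate $k$-spaces in $H$ for each $k = 0, 1, \ldots, n-1$, matching the stated counts of $\binom{n+3}{2}$ points, $\binom{n+3}{3}$ lines, and so on up to $\binom{n+3}{n+1}$ hyperplanes of $H$. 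To complete the proof it suffices to show that the map $S \mapsto M_S$ is injective.

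For distinctness I would argue by contradiction. Suppose $S \ne S'$ are two $(t+1)$-subsets with $M_S = M_{S'} = M$. Then $\langle S \rangle$ and $\langle S' \rangle$ are $t$-spaces each containing $M$, so $\dim(\langle S \rangle \cap \langle S' \rangle) \ge t-1$, and the dimension formula gives $\dim \langle S \cup S' \rangle \le t+1$. Since $S \ne S'$ we also have $|S \cup S'| \ge t+2$. When $|S \cup S'| \le n+2$, the arc property forces $\dim \langle S \cup S' \rangle = |S \cup S'| - 1$, so $|S \cup S'| = t+2$ and $|S \cap S'| = t$. A quick dimension match then yields $\langle S \rangle \cap \langle S' \rangle = \langle S \cap S' \rangle$ and hence $M = \langle S \cap S' \rangle$. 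But $\langle S \cap S' \rangle$ contains the $t$ arc points of $S \cap S'$, none of which lie in $H$ by the construction of $\Gamma$, contradicting $M \subseteq H$.

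The main obstacle, and the only genuine delicacy, is the edge case $|S \cup S'| = n+3$: there the arc spans $\Sigma_{n+1}$, so $\dim \langle S \cup S' \rangle = n+1$ rather than $|S \cup S'| - 1$. Combined with $\dim \langle S \cup S' \rangle \le t+1$ this forces $t = n$ and $|S \cap S'| = n-1$. A parallel dimension count then gives $\dim(\langle S \rangle \cap \langle S' \rangle) = n-1 = \dim M$, so once more $M = \langle S \rangle \cap \langle S' \rangle$. Since this intersection strictly contains $\langle S \cap S' \rangle$ and therefore still contains arc points off $H$, we again contradict $M \subseteq H$. With injectivity of $S \mapsto M_S$ established in both cases, the enumeration stated in the theorem follows immediately.
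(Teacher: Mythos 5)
Your proposal is correct, and it is worth noting that the paper itself states this theorem with no proof at all: it is offered as an immediate consequence of Theorem~\ref{theorem:t+1arcGamman+3} (each $(t+1)$-subset $S$ of $\Gamma$ sections $H$ in a $(t-1)$-space) together with the distinctness of the $\binom{n+3}{2}$ points, which the paper verifies only for $t=1$ in the paragraph beginning ``We claim that these points are distinct.'' What your argument supplies, and what the paper leaves implicit, is precisely the injectivity of $S\mapsto M_S$ for every $t$, without which the counts $\binom{n+3}{t+1}$ would only be upper bounds. Your mechanism is the right one and is the natural generalization of the paper's $t=1$ argument: if $M_S=M_{S'}$ then the dimension formula caps $\dim\langle S\cup S'\rangle$ at $t+1$, the arc property forces $|S\cup S'|=t+2$ (or an outright contradiction), and then $M$ is pinned down as $\langle S\cap S'\rangle$, which contains arc points of $\Gamma$ — but the whole construction places $\Gamma$ off $H$, a contradiction. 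Your separate treatment of $S\cup S'=\Gamma$ is also necessary and handled correctly. The only quibble: in that second case your closing step ``still contains arc points off $H$'' needs $S\cap S'\neq\emptyset$, which holds exactly when $n\ge 2$; for $n=1$ the statement degenerates (a diagonal point of the quadrangle can land anywhere on the line $H$), so you should state $n\ge2$ as a standing hypothesis, consistent with the rest of the paper.
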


%
%
%
\section{From arcs to simplexes in perspective.}
\label{section:arcsToSimplexesInPersp}

We continue with the same notation. 
$H=\Sigma_n$ is a hyperplane in 
$\Sigma_{n+1}$. 
$\Gamma=\Gamma_{n+3} = \{1,2,3,\ldots,n+3\}$ 
is an arc of $n+3$ points in the space $\Sigma_{n+1}$.

\begin{theorem}
  \label{theorem:n+1PointsSimplex}
Let $S=\{(1,3), (1,4),\ldots,(1,n+2), (1,n+3)\}$.
Then the $n+1$ points of $S$ 
generate a subspace of dimension $n$
and form a simplex in~$H$.
\end{theorem}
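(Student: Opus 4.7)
The plan is to derive the statement as a direct corollary of Theorem~\ref{theorem:t+1arcGamman+3}, applied to a suitable $(n+2)$-subset of the arc $\Gamma$.

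First I would set $T = \Gamma \setminus \{2\} = \{1, 3, 4, \ldots, n+2, n+3\}$, a subset of size $n+2 = (n+1)+1$, so that Theorem~\ref{theorem:t+1arcGamman+3} applies with $t = n+1$. Part~a of that theorem then gives that $\langle T\rangle$ has dimension $n+1$ in $\Sigma_{n+1}$, i.e., $\langle T\rangle = \Sigma_{n+1}$. Part~b tells us that the section of $\langle T\rangle$ by the hyperplane $H$ generates a subspace $M \subseteq H$ of dimension $n$, and moreover that $M$ is also generated by the projection of $T$ onto $H$ from any chosen point of $T$.

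Choosing the distinguished point $1 \in T$, the projection of $T \setminus \{1\} = \{3, 4, \ldots, n+3\}$ to $H$ from~$1$ produces, by the definition of the pair notation, precisely the points $(1,3), (1,4), \ldots, (1,n+3)$, which is the set $S$. Hence $S$ generates the $n$-dimensional subspace $M$ of $H$; since $\dim H = n$, this forces $\langle S\rangle = H$. Finally, since $|S| = n+1$ and $S$ generates the $n$-dimensional space $H$, the set $S$ satisfies the definition of a simplex in $H$ given in Section~\ref{section:preliminaries}(b). Distinctness of the points $(1,j)$ is already guaranteed by the discussion at the start of Section~\ref{section:secCoordSysByHyperplane}.

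There is no real obstacle here: the argument is essentially bookkeeping. The only care required is to match parameters correctly (using $t = n+1$, so that $t-1 = n$) and to invoke the ``projection from any point'' clause of Theorem~\ref{theorem:t+1arcGamman+3}(b) with the specific choice of vertex~$1$; once $|S| = n+1$ and $\langle S\rangle = H$ are in hand, the simplex property is immediate from the definition.
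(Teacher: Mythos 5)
Your proof is correct and takes essentially the same route as the paper: the paper's proof is the one-line remark that the result ``follows as in the proof of Theorem~\ref{theorem:t+1arcGamman+3},'' and you have simply made that reduction explicit by applying that theorem to the $(n+2)$-subset $\{1,3,4,\ldots,n+2,n+3\}$ with $t=n+1$ and projecting from the point~$1$. The parameter bookkeeping and the appeal to the definition of a simplex in $H$ are all in order.
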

\begin{proof}
This follows as in the proof of
Theorem~\ref{theorem:t+1arcGamman+3}.
\end{proof}

Similarly we have:
\begin{theorem}
  \label{theorem:similarlyPointsSimplex}
  Let $T= \{(2,3), (2,4),\ldots,(2,n+2),(2,n+3)\}$. 
  Then the points in $T$ form a simplex
  in $H$.
\end{theorem}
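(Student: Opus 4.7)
The plan is to reduce this directly to Theorem~\ref{theorem:t+1arcGamman+3}, using the same argument already employed for Theorem~\ref{theorem:n+1PointsSimplex} but with the label~$1$ replaced by the label~$2$ throughout. Concretely, I would apply part~b of Theorem~\ref{theorem:t+1arcGamman+3} to the subset $S' = \{2, 3, \ldots, n+2, n+3\}$ of the arc $\Gamma_{n+3}$. This subset has $t+1 = n+2$ elements, so $t = n+1$, and the theorem asserts that the section of $\langle S'\rangle$ by $H$ generates a subspace $M$ of $H$ of projective dimension $t-1 = n$. Since $H = \Sigma_n$ itself has dimension~$n$, necessarily $M = H$.

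Part~b also states that $M$ is generated by the projection of $S'$ to $H$ from any single chosen point of $S'$. I would take that distinguished point to be the point labelled~$2$; the projections from~$2$ of the remaining points $3, 4, \ldots, n+3$ are, by the definition of the notation $(i,j)$, precisely the section points $(2,3), (2,4), \ldots, (2, n+3)$, i.e., the members of~$T$. Hence $T$ is a set of $n+1$ points generating $\Sigma_n$, which is the definition of a simplex given in Section~\ref{section:preliminaries}.

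I do not expect any real obstacle; the result is essentially a relabelling of Theorem~\ref{theorem:n+1PointsSimplex}, and a one-line proof invoking Theorem~\ref{theorem:t+1arcGamman+3} should suffice. The only implicit ingredient is that the $n+1$ section points $(2, j)$ are pairwise distinct, but this was already verified at the start of Section~\ref{section:secCoordSysByHyperplane} directly from the arc property of $\Gamma$ (no three points of $\Gamma$ are collinear, and no four are coplanar), so it needs no separate argument here.
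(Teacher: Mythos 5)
Your proposal is correct and matches the paper's own (very terse) treatment: the paper proves this by saying ``similarly'' to Theorem~\ref{theorem:n+1PointsSimplex}, whose proof is precisely the reduction to Theorem~\ref{theorem:t+1arcGamman+3} that you carry out, with the label $2$ playing the role of $1$. Nothing further is needed.
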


\begin{theorem}
  \label{theorem:STAugmentedBy12Arc}
  The sets $S,T$, augmented by the point $(1,2)$, 
  form a coordinate system yielding an arc of 
  size $n+2$ in $H$ in each case.
\end{theorem}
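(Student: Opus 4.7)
The plan is to reduce the claim to the projection argument already used in the alternative proof of Theorem~\ref{theorem:t+1arcGamman+3}~b. By Theorem~\ref{theorem:n+1PointsSimplex} the set $S$ is already a simplex, i.e., its $n+1$ points span~$H$. So to show $S\cup\{(1,2)\}$ is a coordinate system I need only verify that every other $(n+1)$-subset is a simplex, i.e., that for each $i\in\{3,4,\ldots,n+3\}$ the set
\[
S_i \;=\; \bigl(S\setminus\{(1,i)\}\bigr)\cup\{(1,2)\} \;=\; \bigl\{(1,j): j\in\{2,3,\ldots,n+3\}\setminus\{i\}\bigr\}
\]
generates~$H$. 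First I would observe that $S_i$ is precisely the projection from the arc point~$1$ of the $n+1$ arc points $J_i=\{2,3,\ldots,n+3\}\setminus\{i\}$.

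Next I would apply the span computation from Theorem~\ref{theorem:t+1arcGamman+3}b. Let $L=\langle J_i\rangle$; since $J_i$ is $n+1$ points of the arc $\Gamma$, $L$ has dimension~$n$ in $\Sigma_{n+1}$. The arc property forces $1\notin L$: otherwise $\{1\}\cup J_i$ would be $n+2$ arc points lying in an $n$-space, contradicting the fact that every $n+2$ subset of $\Gamma$ is a simplex in $\Sigma_{n+1}$. Now let $M=\langle S_i\rangle\subseteq H$ and set $K=\langle 1,M\rangle$. Because $K$ contains the point~$1$ and each projection $(1,j)$ for $j\in J_i$, it contains the line $\overline{1j}$ and therefore the point $j$ itself; so $K\supseteq\{1\}\cup J_i$, a simplex of $\Sigma_{n+1}$, forcing $\dim K=n+1$. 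Since $1\notin H$ we have $1\notin M$, so $\dim M=\dim K-1=n$, i.e., $M=H$ and $S_i$ is a simplex.

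For the set $T\cup\{(1,2)\}$ the plan is identical once one exploits the symmetry $(1,2)=(2,1)$: the subset $(T\setminus\{(2,i)\})\cup\{(1,2)\}$ is the projection from the arc point~$2$ of the $n+1$ arc points $\{1,3,4,\ldots,n+3\}\setminus\{i\}$, and the same dimension count (with the role of~$1$ replaced by~$2$) shows it spans~$H$; the remaining $(n+1)$-subset $T$ is a simplex by Theorem~\ref{theorem:similarlyPointsSimplex}. Together these verifications give two coordinate systems of size $n+2$ in~$H$.

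The only real subtlety, and the step I would check most carefully, is the assertion $1\notin L$ (respectively $2\notin L'$) that powers the dimension count: it is precisely here that the hypothesis "arc of size $n+3$" rather than "simplex" is used, and it is essentially the same observation that makes the alternative proof of Theorem~\ref{theorem:t+1arcGamman+3}b go through. Everything else is bookkeeping in the spirit of Example~\ref{example:completeQuadrilateral}.
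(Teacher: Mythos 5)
Your proof is correct and follows essentially the same route as the paper: identify each $(n+1)$-subset containing $(1,2)$ as the projection from arc point~$1$ (resp.~$2$) of an $(n+1)$-subset of $\Gamma$, then use the arc property to see the projection spans $H$. You are somewhat more explicit than the paper --- which checks only one representative $n$-subset $Z$ and asserts the dimension of the projection directly --- in that you run through every $S_i$ and justify the dimension count via the lift $K=\langle 1,M\rangle$, exactly the mechanism of the alternative proof of Theorem~\ref{theorem:t+1arcGamman+3}b; this is a welcome tightening but not a different argument.
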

\begin{proof}
  $S$ is a simplex. We show that any $n$-subset $Z$ of $S$, 
  when augmented by $(1,2)$ is a simplex. 
  Let $Z=\{(1,3),(1,4),\ldots,(1,n+2)\}$. 
  We must show that $X=\{(1,2),(1,3),\ldots,(1,n+2)\}$
  is a simplex. $X$ is the projection of 
  $U=\{2,3,4,\ldots,n+2\}$ from the point 1 onto $H$. 
  Since $\langle U\rangle$ has dimension $n$ 
  we conclude that $X$ is a set of $n+1$ points 
  that generates a space of dimension $n$ 
  i.e. $X$ is a simplex in $H$.

  Similarly $T$, augmented by $(1,2)$, forms a coordinate system.
\end{proof}

\begin{theorem}
  \label{theorem:simplexesPointsFaces}
  \begin{enumerate}[a.]
    \item
    The sets $S,T$ are simplexes which are in perspective from 
    the vertex $V=(1,2)$.
    \item
    $S,T$ have no points in common.
    \item
    $S$ and $T$ have no faces in common.
  \end{enumerate}
\end{theorem}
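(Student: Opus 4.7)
\noindent\emph{Sketch of approach.} My plan is to handle the three parts in turn, exploiting the arc structure of $\Gamma$ in $\Sigma_{n+1}$ together with two facts already established in Section~\ref{section:secCoordSysByHyperplane}: no arc point of $\Gamma$ lies on $H$, and the ${n+3 \choose 2}$ section points $(i,j)$ are pairwise distinct.

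Parts~(a) and~(b) are short. For~(a), $S$ and $T$ are already known to be simplexes by Theorems~\ref{theorem:n+1PointsSimplex} and~\ref{theorem:similarlyPointsSimplex}; to see that $V=(1,2)$ is a vertex of perspective, I would fix $k\in\{3,\ldots,n+3\}$ and lift the configuration to $\Sigma_{n+1}$: the three points $V$, $(1,k)$, $(2,k)$ are the intersections with $H$ of the lines $\langle 1,2\rangle$, $\langle 1,k\rangle$, $\langle 2,k\rangle$, all three of which form a triangle in the plane $\pi_k=\langle 1,2,k\rangle\subset\Sigma_{n+1}$. Since $\pi_k$ contains arc points and no arc point lies on $H$, $\pi_k$ is not contained in $H$; hence $\pi_k\cap H$ is a line, and the three sections all lie on it. For~(b), the distinctness of the section points, together with the fact that $S$ and $T$ are indexed by disjoint families of unordered pairs, immediately yields $S\cap T=\emptyset$.

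Part~(c) is the real content. When $n=1$ each face is a single vertex, so (c) reduces to~(b); assume $n\ge 2$. Label a face of $S$ by the omitted vertex: $F_m^S$ is generated by $\{(1,j):j\in\{3,\ldots,n+3\}\setminus\{m\}\}$, and similarly $F_\ell^T$ for $T$. By the alternative proof of Theorem~\ref{theorem:t+1arcGamman+3}, $F_m^S=U_m\cap H$ where $U_m=\langle\{1\}\cup(\{3,\ldots,n+3\}\setminus\{m\})\rangle$ is a hyperplane of $\Sigma_{n+1}$, and likewise $F_\ell^T=W_\ell\cap H$ with $W_\ell=\langle\{2\}\cup(\{3,\ldots,n+3\}\setminus\{\ell\})\rangle$. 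Assuming $F_m^S=F_\ell^T$, I would argue in three steps: (i)~$U_m\ne W_\ell$, because a single hyperplane containing both arc points $1$ and $2$ would hold at least $n+2$ arc points, contradicting the arc property; (ii)~therefore $U_m\cap W_\ell$ is an $(n-1)$-subspace containing the $(n-1)$-dimensional face $F_m^S=U_m\cap H$, forcing $U_m\cap W_\ell=F_m^S\subseteq H$; (iii)~but $U_m\cap W_\ell$ also contains the arc points indexed by $\{3,\ldots,n+3\}\setminus\{m,\ell\}$, a nonempty set since $n\ge 2$, and those arc points cannot lie on $H$.

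The main obstacle is part~(c). The decisive move is to resist computing inside $H$ and instead lift the faces to hyperplanes of $\Sigma_{n+1}$, where the arc axiom forces the two lifts to be distinct and the placement of $\Gamma$ off $H$ supplies the needed contradiction.
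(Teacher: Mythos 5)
Your proposal is correct, and parts~(a) and~(b) essentially coincide with the paper's proof: the paper also gets the simplex property from Theorems~\ref{theorem:n+1PointsSimplex} and~\ref{theorem:similarlyPointsSimplex}, gets perspectivity from the collinearity of $(1,2)$, $(1,k)$, $(2,k)$ (which it records earlier as the statement that the section of a triangle of arc points by $H$ is three collinear points --- your plane $\pi_k=\langle 1,2,k\rangle$ argument is just that observation spelled out), and gets~(b) from the distinctness of the labels. Part~(c) is where you genuinely diverge. The paper stays inside $H$: a common face $\Lambda$ contains $(1,j)$ and $(2,j)$ for some common index $j$, hence contains the vertex $V=(1,2)$ on the line joining them, so $\Lambda$ augmented by $V$ packs $n+1$ points of the coordinate system of Theorem~\ref{theorem:STAugmentedBy12Arc} into an $(n-1)$-space, contradicting that theorem. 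You instead lift each face to the hyperplane of $\Sigma_{n+1}$ spanned by $n+1$ arc points and derive the contradiction upstairs from the arc axiom together with the fact that $\Gamma$ avoids $H$. Your steps all check: $U_m\cap H$ has dimension $n-1$ by the dimension formula (since $U_m\not\subseteq H$) and contains the $(n-1)$-dimensional face, so equals it; $U_m=W_\ell$ would put at least $n+2$ arc points in an $n$-space; $U_m\cap W_\ell$ then has dimension exactly $n-1$, is forced into $H$ by the assumed equality of faces, yet contains the arc points indexed by $\{3,\ldots,n+3\}\setminus\{m,\ell\}$, a nonempty set for $n\ge 2$. The paper's route is shorter because it leans on Theorem~\ref{theorem:STAugmentedBy12Arc}, which was proved for exactly this purpose; yours bypasses that theorem entirely, needing only the arc axiom in $\Sigma_{n+1}$ and the placement of $\Gamma$ off $H$, and as a small bonus it identifies the lift $U_m\cap W_\ell$ of a putative common face explicitly. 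Both arguments are sound.
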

\begin{proof}
  From 
  Theorem~\ref{theorem:n+1PointsSimplex}
  and 
  Theorem~\ref{theorem:similarlyPointsSimplex},
  $S$ and $T$ are simplexes. 
  They are in perspective from the point $V=(1,2)$ because $(1,2)$ 
  is collinear with points $(1,i)$ and $(2,i)$ for $i$ 
  between 3 and $n+3$.
  The labels of their points show that $S$ and $T$ have no points in common.

  For part~c, suppose that $S$ and $T$ have a face 
  $\Lambda$ in common. 
  The lines joining the $n$ corresponding points 
  $A_i, B_i$ in $\Lambda$ all contain 
  the vertex $V$. 
  Then $\Lambda$, augmented by $V$, is not a
  simplex in $H$, contradicting
  Theorem~\ref{theorem:STAugmentedBy12Arc}.
\end{proof}

%
%
%
\section{From simplexes in perspective to arcs.}
\label{section:simplexesToArcs}

Our goal now is to show that two simplexes in perspective 
which have no common points or faces arise from an arc as 
developed in 
Section~\ref{section:arcsToSimplexesInPersp}.

\begin{theorem}
\label{theorem:twoSimplexesNoPtNoFaceArcHyperplane}
In $\Sigma_n$ let 
$$A =\{A_3,A_4,\ldots,A_{n+2}, A_{n+3}\}$$
and 
$$B=\{B_3,B_4,\ldots,B_{n+2}, B_{n+3}\}$$
denote two simplexes in $\Sigma_n$ 
which are in perspective from a point $V$
and have no point in common.
Assume also that the two simplexes have no face 
in common or, equivalently, that 
$V$ does not lie on a face of $A$ or of $B$.
Then, as in 
Theorem~\ref{theorem:simplexesPointsFaces},
$A,B$ arise from the section of the 
space generated from an arc of size $n+3$ in 
$\Sigma_{n+1}$ by $H=\Sigma_n$, an $n$-dimensional 
hyperplane of $\Sigma_{n+1}$.
\end{theorem}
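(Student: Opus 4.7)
The plan is to reverse-engineer the construction of Section~\ref{section:arcsToSimplexesInPersp}. I embed the given $\Sigma_n$ as a hyperplane $H$ in an $(n+1)$-dimensional projective space $\Sigma_{n+1}$ (over the same field) and aim to build an arc $\Gamma=\{P_1,P_2,P_3,\ldots,P_{n+3}\}$ of $n+3$ points in $\Sigma_{n+1}$, all off $H$, whose section by $H$ recovers the given data via $(1,2)=V$, $(1,i)=A_i$ and $(2,i)=B_i$ for $3\le i\le n+3$. Once $\Gamma$ is exhibited as an arc, Theorem~\ref{theorem:simplexesPointsFaces} identifies $A,B$ as the two simplexes arising from $\Gamma$, as required.

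For the construction, I choose any line $\ell$ of $\Sigma_{n+1}$ through $V$ not lying in $H$ and two points $P_1,P_2\in \ell\setminus\{V\}$. For each $i\in\{3,\ldots,n+3\}$ the line $VA_i=VB_i$ lies in $H$ and meets $\ell$ at $V$, so $\pi_i:=\langle \ell,A_i,B_i\rangle$ is a plane. Inside $\pi_i$ the lines $P_1A_i$ and $P_2B_i$ are distinct (if equal, the common line would contain $P_1,P_2$, hence equal $\ell$, forcing $A_i\in\ell\cap H=\{V\}$), so they meet in a unique point $P_i$. Straightforward checks, using $V\notin A\cup B$ from Theorem~\ref{theorem:twoSimplexesFourParts} and $A_i\ne B_i$, confirm that $P_i\notin H$, that $P_i\notin\{P_1,P_2,V\}$, and that the points $P_3,\ldots,P_{n+3}$ are pairwise distinct; thus $\Gamma$ consists of $n+3$ distinct points of $\Sigma_{n+1}\setminus H$.

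The main obstacle is verifying that $\Gamma$ is an arc, i.e.\ that every $(n+2)$-subset spans $\Sigma_{n+1}$. Suppose for contradiction that $\Gamma\setminus\{P_j\}$ lies in a hyperplane $K$ of $\Sigma_{n+1}$. Since no $P_i$ lies on $H$ we have $K\ne H$, so $K\cap H$ is a subspace of $H$ of dimension $n-1$. If $j\ge 3$, then $P_1,P_2\in K$ forces $\ell\subset K$, whence $V=\ell\cap H\in K\cap H$; moreover, for every $i\in\{3,\ldots,n+3\}\setminus\{j\}$, the line $P_1P_i\subset K$ meets $H$ at $A_i$, so $A_i\in K\cap H$. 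These $n$ points $\{A_i:i\ne j\}$ span a face $F$ of the simplex $A$, and by hypothesis $V\notin F$, so $\langle F,V\rangle=H$, contradicting $\dim(K\cap H)=n-1$. The cases $j=1$ and $j=2$ are easier: all $n+1$ points of $A$ (respectively $B$) land in $K\cap H$, so $K\cap H=H$ and hence $K=H$, contradicting $P_2\notin H$ (respectively $P_1\notin H$). This is precisely the step in which the no-common-face hypothesis is decisive.

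With $\Gamma$ shown to be an arc, the labelling $(1,2)=\ell\cap H=V$, $(1,i)=P_1P_i\cap H=A_i$ and $(2,i)=P_2P_i\cap H=B_i$ holds by construction, and the desired conclusion follows from Theorem~\ref{theorem:simplexesPointsFaces}.
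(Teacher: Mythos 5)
Your proposal is correct and follows essentially the same route as the paper: the identical construction of the arc $\Gamma_{n+3}$ from a line through $V$ not in $H$ and the intersection points $P_1A_i\cap P_2B_i$, with the no-common-face hypothesis invoked at exactly the same step. The only difference is presentational: you verify the arc property by contradiction, intersecting a putative spanning hyperplane $K$ with $H$, whereas the paper runs the same three cases (omitted point $1$, $2$, or some $j\ge 3$) as a direct dimension count.
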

\begin{proof}
We choose a line $l$ on $V$ not lying in 
$H$ and on it we choose two points 
labelled $1,2$ which are different from $V$. 
Define a set $\Gamma_{n+3}$ as follows:
Point~$i$ is the intersection of the lines $1A_i$ and $2B_i$, 
$3\le i\le n+3$.
This is well-defined as, for each $i$, those two lines 
lie in a plane containing two distinct lines on $V$ i.e. 
the lines joining $V$ to the point~$1$ and the point $A_i$.
We claim that $\Gamma_{n+3} = \{1,2 ,3, 4,\ldots,n+2, n+3\}$ 
is an arc of size $(n+1) +2$ yielding a coordinate system 
in $\Sigma_{n+1}$ such that none of its points lie in $\Sigma_n$.

We must show that every $(n+2)$-subset of $\Gamma_{n+3}$ 
generates the space $\Sigma_{n+1}$.
Such a subset must contain either 1, or 2, or both.

Case 1.
We show that the $n+2$-set 
$S= \{1,3,4,\ldots,n+2, n+3\}$ 
generates $\Sigma_{n+1}$ as follows. 
The $n+1$ points $\{A_3,A_4,\ldots,An+3\}$ 
generate a face of $A$, which is an $n$-space in 
$\Sigma_n$ as $A$ is a simplex. 
The face is a projection of the space 
$U=\langle \{3, 4,\ldots,n+2,n+3\}\rangle$
from the external point~1.
Thus $U$ has dimension $n$. 
When we adjoin~1, the enlarged space, generated by 
$\{1,3, 4,\ldots,n+2, n+3\}$, has dimension~$n+1$.
[Alternatively the above face of $A$ is an $n$-space. 
Since the point~$1$ is not in $\Sigma_n$ 
adjoining it to this face generates an
$n+1$-space, namely the space generated by
$\{1,3,4,\ldots,n+2,n+3\}$].

Case 2. 
The $n+2$-set $\{2,3,4,\ldots,n+2,n+3\}$ 
also generates $\Sigma_{n+1}$.
The proof is the same as for 
Case~1 upon interchanging points $1, 2$.

Case 3.
We show that the set $\{1,2,3,\ldots,n+2\}$ 
generates $\Sigma_{n+1}$. 
The $n$ points 
$\{A_3, A_4,\ldots,A_{n+2}\}$ 
form a face of $A$ and generate a hyperplane $K$ of 
dimension $n-1$ in 
$\Sigma_n$. 
By hypothesis, the point $V$ is not in $K$, so 
$\langle K,V\rangle$ has dimension $n$. 
Adjoining the point~$1$ to $\langle K,V\rangle$ 
yields an $(n+1)$-dimensional space $W$ in
$\Sigma_{n+1}$.
The points $1,2$ and $V$ are collinear. 
So $W$
contains $1$ and $2$.
Since the points $1,i$ and $A_i$ are collinear,
$W$ also contains points $3,4,\ldots,n+1,n+2$.
Since $\langle 1,2,3,\ldots,n+1,n+2\rangle$ 
contains $K,V$ and the point~$1$ it is an $(n+1)$-space 
contained in $W$ so it must be $W$.
In summary, the set 
$\Gamma_{n+3}$ above is an $(n+3)$-arc in 
$\Sigma_{n+1}$. 
Moreover, no point of it lies in $\Sigma_n$.
To see this, the points $1,2$ lie outside $\Sigma_n$. 
Suppose that a point~$i$ other than the points $1,2$ 
lies in $\Sigma_n$. 
Since $A_i$ lies in $\Sigma_n$ and the points 
$1,i, A_i$ are collinear, this implies that the 
point~$1$ is in $\Sigma_n$, which is a contradiction.

We examine the section of the arc 
$\Gamma_{n+3}$ in 
$\Sigma_{n+1}$ by the hyperplane $H=\Sigma_n$, 
assigning new labels to the two simplexes, and to $V$, as follows.
$V$ is on the line joining points $1$ and $2$ and is relabelled $(1,2)$.
$A_i$ is on the line joining points $1$ and $i$ so it becomes $(1,i)$.
Similarly $B_i$ is now the point $(2,i)$ for 
$i= 3,4,\ldots,n+2$.

Because the points $(1,i),(2,i)$ and $(1,2)$ 
are collinear we have that $A_i$ and $B_i$
are in perspective from the vertex $(1,2)$.
In summary the two simplexes $A, B$ 
are contained in the section of the arc 
$\Gamma_{n+3}$ in $\Sigma_{n+1}$ 
which yields a set $X$ of 
${n+3\choose 2}$ 
points in $\Sigma_n$. 
$X$ contains two simplexes, 
$A=\{(1,3), (1,4),\ldots,(1,n+3)\}$, $B=\{(2,3), (2,4),\ldots,(2,n+3)\}$.
They are in perspective from $(1,2)$ and they do not 
share a point. The vertex of perspective 
which is the point $(1,2)$, does not lie on
any face of $A$ or $B$.
Thus $A$ and $B$ do not share a face.
This proves 
Theorem~\ref{theorem:twoSimplexesNoPtNoFaceArcHyperplane}.
\end{proof}

%
%
%
\section{An extension of the Desargues theorem.}
\label{section:extensionOfDesargues}

It is time to reap the benefit of the work in 
Sections~\ref{section:ArcsAndCoordSystems}-\ref{section:simplexesToArcs}.
This section contains a second proof of the extended Desargues theorem 
in all dimensions.

\begin{definitionN}
  Let $A, B$ be simplexes of $\Sigma_n$ which have 
  no common points or faces. They are defined to be
  in perspective from a hyperplane $v$ if there is 
  a correspondence between the points of $A,B$ – and 
  therefore the subspaces of $A,B$ - such that the following holds:
  the intersection of corresponding $t$-spaces of $A,B$ 
  is a $(t-1)$-space lying in a fixed hyperplane $v$ for 
  $t=1,2,\ldots,n-1$.
\end{definitionN}

\begin{theorem}
  \label{twoSimplexesNoPointNoHyperplaneTwoInPerspective}
  Let $A, B$ denote two simplexes in the space 
  $\Sigma_n$ with no common point or hyperplane.
  Then, if $A, B$ are in perspective from a point 
  they are in perspective from a hyperplane.
\end{theorem}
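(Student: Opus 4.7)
The plan is to invoke Theorem~\ref{theorem:twoSimplexesNoPtNoFaceArcHyperplane} to realize the simplexes $A,B$ as the image under the section map of an $(n+3)$-arc in $\Sigma_{n+1}$, and then read the intersection structure of corresponding $t$-spaces of $A,B$ directly off dimension arithmetic in the ambient space $\Sigma_{n+1}$. First, embed $H=\Sigma_n$ as a hyperplane of $\Sigma_{n+1}$ and apply Theorem~\ref{theorem:twoSimplexesNoPtNoFaceArcHyperplane} to produce an arc $\Gamma_{n+3}=\{1,2,3,\ldots,n+3\}$ in $\Sigma_{n+1}$ with no point on $H$, such that $V=(1,2)$, $A=\{(1,i):3\le i\le n+3\}$, and $B=\{(2,i):3\le i\le n+3\}$ under corresponding pairs $(1,i)\leftrightarrow(2,i)$.

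Next, fix a subset $I\subseteq\{3,\ldots,n+3\}$ with $|I|=t+1$ for some $1\le t\le n-1$, and let $A_I=\langle(1,i):i\in I\rangle$ and $B_I=\langle(2,i):i\in I\rangle$ be the corresponding $t$-spaces of $A$ and $B$. By Theorem~\ref{theorem:t+1arcGamman+3} applied to $\{1\}\cup I$ and $\{2\}\cup I$, we have $A_I=L_A\cap H$ and $B_I=L_B\cap H$, where $L_A:=\langle\{1\}\cup I\rangle$ and $L_B:=\langle\{2\}\cup I\rangle$ are $(t+1)$-dimensional subspaces of $\Sigma_{n+1}$. Put $M:=\langle I\rangle$. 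The arc property gives $\dim M=t$ and $\dim\langle\{1,2\}\cup I\rangle=t+2$ (using $t+3\le n+2$), so the dimension formula yields
$$\dim(L_A\cap L_B)=(t+1)+(t+1)-(t+2)=t,$$
forcing $L_A\cap L_B=M$ (as $M\subseteq L_A\cap L_B$ already). Hence
$$A_I\cap B_I=(L_A\cap L_B)\cap H=M\cap H,$$
which is a $(t-1)$-space by Theorem~\ref{theorem:t+1arcGamman+3}.

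Finally, set $U:=\langle 3,4,\ldots,n+3\rangle\subseteq\Sigma_{n+1}$. By the arc property $U$ has dimension $n$, and since $U$ contains arc points off $H$ it is not contained in $H$, so $v:=U\cap H$ is an $(n-1)$-dimensional hyperplane of $H=\Sigma_n$. For every admissible index set $I\subseteq\{3,\ldots,n+3\}$ one has $M=\langle I\rangle\subseteq U$, so
$$A_I\cap B_I=M\cap H\subseteq U\cap H=v.$$
Thus every intersection of corresponding $t$-spaces of $A,B$ lies in the fixed hyperplane $v$ and has the correct dimension $t-1$, which is exactly the definition of perspective from a hyperplane. The only genuine obstacle in the argument is the dimension identity $\dim\langle\{1,2\}\cup I\rangle=t+2$; this is where the arc hypothesis is essential, and the restriction $t\le n-1$ exactly guarantees that $|\{1,2\}\cup I|=t+3\le n+2$ lies in the range covered by the arc property of $\Gamma_{n+3}$.
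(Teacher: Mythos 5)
Your proposal is correct and takes essentially the same route as the paper: both reduce to the arc realization of Theorem~\ref{theorem:twoSimplexesNoPtNoFaceArcHyperplane} and then read the perspectivity from the hyperplane off the section of $\Gamma_{n+3}$ by $H$, with the fixed hyperplane $v$ arising as the section of $\langle 3,\ldots,n+3\rangle$. The one point where you go beyond the paper's write-up is the dimension-formula computation in $\Sigma_{n+1}$ showing $L_A\cap L_B=\langle I\rangle$ exactly, which pins down that the intersection of corresponding $t$-spaces is precisely (not merely at least) a $(t-1)$-space --- a detail the paper's proof of Theorem~\ref{twoSimplexesNoPointNoHyperplaneTwoInPerspective} leaves implicit by only exhibiting the points $(i,j)$ and the span they generate.
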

\begin{proof}
  From 
  Theorem~\ref{theorem:twoSimplexesNoPtNoFaceArcHyperplane}.
  we may assume that $A,B$ are as follows:
  $$A= \{(1,3), (1,4),\ldots,(1,n+3)\},$$
  $$B=\{(2,3), (2,4),\ldots,(2,n+3)\}.$$
  They are in perspective from the vertex $(1,2)$ 
  since the line joining corresponding points $(1,i)$ and $(2,i)$
  contains the vertex $(1,2)$,
  $3\le i\le n+3$.
  The edge of $A$ joining $(1,i)$ to $(1,j)$ contains the point $(i,j)$.
  The edge of $B$ joining $(2,i)$ to $(2,j)$ contains the point $(i,j)$.
  Moreover each point $(i,j)$ arises as the intersection of 
  two corresponding edges,
  $3\le i,j\le n+3$.
  Thus the set $S$ of points $(i,j)$, which is 
  the set of all intersections of corresponding edges of $A$ with $B$ 
  is found as the section of the configuration generated 
  by the $(n+1)$-set $U=\{3,4,\ldots,n+3\}$ in $\Sigma_{n+1}$ 
  by the hyperplane $H=\Sigma_n$. 
  $U$ is an $(n+1)$-subset of the arc $\Gamma_{n+3}$ in $\Sigma_{n+1}$.

From 
Theorem~\ref{theorem:t+1arcGamman+3} part~b, 
$Dim(\langle S\rangle)= n-1$ and $S$ generates a hyperplane 
in $H=\Sigma_n$. 
The number of points in $S$ is ${n+1\choose 2}$.

More generally, let $Q,R$ denote sets of $t+1$ 
corresponding pairs of points of $A,B$. 
For example let $Q=\{(1,3) ,(1,4),\ldots,(1,t+3)\}$ and
and let $R=\{(2,3),(2,4),\ldots,(2,t+3)\}$.
Then $Q,R$ each generate a $t$-space. 
As above, in the case when $t=n$, the set of intersections of 
all pairs of corresponding edges consists of points $(i,j)$ 
for $i,j$ lying between $3$ and $t+3$ with $i$ unequal $j$. 
These points generate a space of dimension $t-1$ in $H$.
\end{proof}

The dual of 
Theorem~\ref{twoSimplexesNoPointNoHyperplaneTwoInPerspective},
where we use the reciprocity that interchanges points
and faces as mentioned in Section~\ref{section:preliminaries},
provides a converse to it as follows.

\begin{theorem}
  Let $A, B$ be two simplexes in a 
  projective space $\Sigma_n$ with no common point 
  and no common hyperplane. Then if $A,B$ are in perspective 
  from a hyperplane they are in perspective from a point.
\end{theorem}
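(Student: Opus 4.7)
The plan is to derive this result as the formal projective dual of Theorem~\ref{twoSimplexesNoPointNoHyperplaneTwoInPerspective}, as the sentence preceding the statement indicates. To make the argument concrete I would fix a correlation $\pi$ of $\Sigma_n$, that is, an incidence-reversing bijection on the lattice of subspaces that sends each $t$-space to an $(n{-}t{-}1)$-space and interchanges joins with intersections. The reciprocity described in Section~\ref{section:preliminaries}, which maps each vertex of a fixed simplex to its opposite face, extends to such a $\pi$ and supplies a concrete model.

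First I would transport the data. A simplex is self-dual, so $A^{\ast} := \pi(A)$ and $B^{\ast} := \pi(B)$ are simplexes in $\Sigma_n$, whose vertices are the $\pi$-images of the faces of $A, B$ and whose faces are the $\pi$-images of the vertices of $A, B$. The labelling $A_i \leftrightarrow B_i$ pushes forward to a labelling of the vertices of $A^{\ast}, B^{\ast}$. The hypothesis that $A, B$ share no common point dualizes to $A^{\ast}, B^{\ast}$ sharing no common hyperplane, and vice versa; so once perspectivity of $(A^{\ast}, B^{\ast})$ from a point is established, the hypotheses of Theorem~\ref{twoSimplexesNoPointNoHyperplaneTwoInPerspective} applied to $(A^{\ast}, B^{\ast})$ are in place.

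Second I would translate the perspectivity hypothesis. The condition that, for each $t = 1, \ldots, n-1$, intersections of corresponding $t$-spaces of $A, B$ are $(t{-}1)$-spaces contained in the fixed hyperplane $v$ becomes, under $\pi$, the condition that joins of the corresponding $(n{-}t{-}1)$-spaces of $A^{\ast}, B^{\ast}$ are $(n{-}t)$-spaces all containing the point $V^{\ast} := \pi(v)$; containment $\subseteq v$ dualizes to $\supseteq V^{\ast}$, while dimensions flip by $\dim \mapsto n - 1 - \dim$. In particular, taking $t = n-1$, the joins of corresponding vertices of $A^{\ast}, B^{\ast}$ are lines through $V^{\ast}$, so $A^{\ast}$ and $B^{\ast}$ are in perspective from the point $V^{\ast}$.

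Finally, Theorem~\ref{twoSimplexesNoPointNoHyperplaneTwoInPerspective} applied to $(A^{\ast}, B^{\ast})$ yields a hyperplane $w^{\ast}$ from which they are perspective. Dualizing once more via $\pi^{-1}$, perspectivity of $(A^{\ast}, B^{\ast})$ from $w^{\ast}$ becomes perspectivity of $(A, B)$ from the point $\pi^{-1}(w^{\ast})$, as required. I do not expect a genuine obstacle here: the argument is simply the principle of duality applied twice, and the only care needed is the dimension bookkeeping $t \leftrightarrow n - t - 1$ together with the verification that the induced point-correspondence remains consistent under $\pi$ and $\pi^{-1}$, both of which are formalities once the correlation is fixed.
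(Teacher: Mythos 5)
Your proposal is correct and is essentially the paper's own argument: the paper offers no written proof beyond the remark that the statement is ``the dual of Theorem~\ref{twoSimplexesNoPointNoHyperplaneTwoInPerspective}, where we use the reciprocity that interchanges points and faces,'' and your fixed correlation $\pi$, the dimension bookkeeping $t \mapsto n-t-1$, and the double application of duality simply make that one-line justification precise. No gap; your version is just the fully spelled-out form of what the paper leaves implicit.
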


%
%
%
\section{The Configurations.}
\label{section:theConfigs}

To recap, we are working in $\Sigma_n$ which is contained 
as a hyperplane $H$ in $\Sigma_{n+1}$.
The arc $\Gamma_{n+3}$ in $\Sigma_{n+1}$, 
when sectioned by $H$, yields a set $W$ of 
${n+3\choose 2}$ points
in $H$. As in 
Section~\ref{section:arcsToSimplexesInPersp},
$W$ contains two simplexes $A, B$ accounting for $2(n+1)$ points. 
The vertex of perspective is one point. Then, from 
Section~\ref{section:extensionOfDesargues},
the intersections of pairs of corresponding edges yield 
${n+1\choose 2}$ 
additional points in a hyperplane of
$H=\Sigma_n$.
This accounts for all points of the configuration! 
The proof follows from the following identity.

\begin{equation}
  {n+3 \choose  2} =2(n+1) +1 +{n+1 \choose 2}.
\end{equation}
\bigskip

From the manner of assigning pairs $(i,j)$ to points in $H$, 
different pairs yield different points. Thus the points of the 
simplexes, the vertex and the edge-intersections 
are distinct sets of points.

\begin{theorem}
  \label{theorem:n+3choose2pointsVertexPersp}
  Each of the ${n+3\choose 2}$ points 
of the configuration is a vertex of perspectivity for a 
pair of simplexes having no points or faces in common.
\end{theorem}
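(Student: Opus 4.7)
The plan is to exploit the combinatorial symmetry of the arc $\Gamma_{n+3}=\{1,2,\ldots,n+3\}$ in $\Sigma_{n+1}$. The point set $W$ of the configuration in $H=\Sigma_n$ is indexed by the $\binom{n+3}{2}$ unordered pairs $(i,j)$ drawn from the labels of $\Gamma_{n+3}$, and every geometric property established so far (collinearity, arc property, generation of subspaces, etc.) depends only on the combinatorial structure of these labels, not on the particular naming of the points of $\Gamma_{n+3}$. So the plan is to take an arbitrary pair $(a,b)$ with $1\le a<b\le n+3$ and exhibit two simplexes in perspective from $(a,b)$ by an explicit ``symmetric copy'' of the construction used in Section~\ref{section:arcsToSimplexesInPersp} for the pair $(1,2)$.

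Concretely, given $(a,b)$, I would define
$$A_{a,b}=\bigl\{(a,k):k\in\{1,\ldots,n+3\}\setminus\{a,b\}\bigr\},\qquad
B_{a,b}=\bigl\{(b,k):k\in\{1,\ldots,n+3\}\setminus\{a,b\}\bigr\}.$$
Each set has $n+1$ points, exactly the pattern that yielded $S,T$ in Theorems~\ref{theorem:n+1PointsSimplex} and~\ref{theorem:similarlyPointsSimplex} when $(a,b)=(1,2)$. The cleanest way to verify the required properties is to apply the permutation $\sigma\in S_{n+3}$ that swaps $1\leftrightarrow a$ and $2\leftrightarrow b$ (and fixes the other labels). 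Since the arc $\Gamma_{n+3}$ is a coordinate system in $\Sigma_{n+1}$ and the entire construction in Sections~\ref{section:ArcsAndCoordSystems}--\ref{section:simplexesToArcs} is carried out purely in terms of how pairs of arc-labels correspond to points of $W$, the relabeling $\sigma$ sends the pair $A_{a,b},B_{a,b}$ with vertex $(a,b)$ precisely to the pair $S,T$ with vertex $(1,2)$ treated in Theorem~\ref{theorem:simplexesPointsFaces}.

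Appealing to Theorem~\ref{theorem:simplexesPointsFaces} after this relabeling then yields in one stroke: (i) $A_{a,b}$ and $B_{a,b}$ are simplexes in $H$; (ii) they are in perspective from $(a,b)$, since $(a,k)$, $(b,k)$ and $(a,b)$ are the section by $H$ of the three sides of the triangle with vertices $a,b,k$ of $\Gamma_{n+3}$ in $\Sigma_{n+1}$ and are therefore collinear; (iii) their point sets are disjoint, as the index $a$ and the index $b$ never collide in the labels; and (iv) they share no face, by the same argument used in the proof of Theorem~\ref{theorem:simplexesPointsFaces}(c): any common face together with $(a,b)$ would fail to be a simplex, contradicting the coordinate-system property of $A_{a,b}\cup\{(a,b)\}$ (which is the image under $\sigma$ of $S\cup\{(1,2)\}$ in Theorem~\ref{theorem:STAugmentedBy12Arc}).

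Since there is no real obstacle beyond bookkeeping, the only subtle point to mention is why the permutation argument is legitimate: the arc $\Gamma_{n+3}$ carries no distinguished labeling, and all of the combinatorial labels $(i,j)$ were assigned by us when taking the section. Thus relabeling the arc induces a bijective relabeling of $W$ that preserves incidence and the simplex/face structure, so every statement proved for the pair $(1,2)$ transfers verbatim to the pair $(a,b)$. This gives the required conclusion for all $\binom{n+3}{2}$ points simultaneously.
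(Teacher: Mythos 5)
Your proposal is correct and takes essentially the same route as the paper, which in fact gives two short proofs of this theorem: one that sections the lines joining the arc points $i,j$ to the remaining arc points (your explicit construction of $A_{a,b},B_{a,b}$), and one that invokes a permutation of the labels $\{1,\ldots,n+3\}$ preserving the incidence rule ``two points are collinear iff their labels share a symbol'' (your relabeling $\sigma$). Your write-up merely combines and slightly elaborates these two arguments, correctly noting that the results of Sections~\ref{section:arcsTosimplexesInPersp}--\ref{section:simplexesToArcs} depend only on the arc property and not on the distinguished labels $1,2$.
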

\begin{proof}[Proof 1.]
Choose any point $(i,j)$ in $H$. 
It lies on the line joining points $i$ and $j$ of the arc. 
It will be the vertex. The section by $H$ of the lines 
joining $i,j$ to the other points of the existing arc 
yields two simplexes in perspective from the point $(i,j)$.
The intersections of corresponding edges of the new pair 
of simplexes also lie in the original configuration.
\end{proof}
\begin{proof}[Proof 2.]
Any permutation $T$ of $\{1,2,3,....n+3\}$ when applied to
the points of the configuration yields two simplexes in 
perspective from a vertex, with the intersections 
of corresponding edges lying in a hyperplane. 
This is so because two points $(x,y)$ and $(z,w)$
are contained in a line i.e., share a symbol, 
if and only their images under $T$ share a symbol.
\end{proof}

\begin{unnamed}
  \label{unnamed:initialArcInSigma5}
The case $n=4$. 
The underlying initial arc in $\Sigma_5$
has $5+2$, i.e. $7$, points named $1,2,3,\ldots,7$. 
The section of lines joining pairs of points of the arc 
by the hyperplane $H=\Sigma_4$ yields a set of 21 distinct points.
From 
Theorem~\ref{twoSimplexesNoPointNoHyperplaneTwoInPerspective}
the section of the lines joining pairs of points of 
the 5-subset $X=\{3,4,5,6,7\}$ by $H$ generates a 
$\Sigma_3$ subspace of $H$ denoted by $v$. 
The section of (the lines generated by) $X$ contains 
10 points $(i,j)$ where $i$ and $j$ are different elements of $X$. 
These 10 points are the intersections of pairs of 
corresponding edges of the given simplexes $A, B$ in $H$
as in Theorem~\ref{twoSimplexesNoPointNoHyperplaneTwoInPerspective}.
\end{unnamed}

We mention some facts on the structure of these ten points. 
Choose any of the ten points to be a vertex, say the point $(3,4)$. 
Then we have two triangles namely $\{(3,5), (3,6), (3,7)\}$ and 
$\{(4,5),(4,6),(4,7)\}$ which are in perspective from $(3,4)$.
From the arc property the triangles lie in different planes.
[If, for example, $(4,7)$ was in the plane
containing $(3,5),(3,6),(3,7)$
then the arc points $3,4,5,6,7$
would only generate a 3-space.]
The intersection of the corresponding triangle edges 
lies on the line of intersection of the two planes.
The intersection points are $\{(4,5),(4,6),(4,7)\}$.
The two planes of the triangles 
in perspective meet in a line in $v$. 
Each line in $v$ is descended from a 3-subset in $\Sigma_5$. 
A 3-set such as $\{3,4,5\}$ lies in two 4-subsets of 
$\{3,4,5,6,7\}$.  Thus each line in $v$ lies in just 2 planes of $v$.
Each point such as $(3,4)$ lies on 3 lines formed from the triples 
$\{345\},\{346\}, \{347\}$ and on 3 planes formed from the 
4-sets $\{3456\} ,\{3457\},\{3467\}$.

The structure of the 10 points is symmetric in the 
sense that any point is the vertex of perspective of 
two triangles such that the intersection of pairs 
of corresponding edges lie on a line, the axis of perspectivity.

As mentioned above each of the 21 points serves as a 
vertex of perspective of two simplexes in $\Sigma_4$. 
The intersections
of corresponding edges yield a set of 10 points 
as described above. So we have 21 such sets of ten points.

%
%
%
\section{Self replication of configurations.}
\label{section:selfReplicationOfConfigs}

For want of better terminology we first 
define what is meant by a ``semi-simplex''.

\begin{unnamed}
In $\Sigma_n$ a semi-simplex is defined to be a set 
of $n$ points which generate an $(n-1)$-space.
A simplex is a set of $n+1$ points which generate the $\Sigma_n$.
We examine the configuration of a pair of semi-simplexes 
in perspective and the intersections of corresponding edges.

$n=1$.
Here a semi-simplex pair is a set of 2 points $A,B$ on a line $l$. 
Let $V$ be another point on $l$. 
Then $A,B$ are in perspective from $V$. 
So the configuration is a line with 3 points $A,B$ and $V$.

$n=2$.
In the plane a semi-simplex is a pair of points $A_1, B_1$ on a line $L_1$. 
Let $A_2$, $B_2$ form another line $L_2$ in the plane. 
Let $V$ denote the intersection of $A_1A_2$ with $B_1B_2$.
We now have two semi-simplexes in perspective from $V$.

Next, the intersections of corresponding edges is the point 
$C_3$ where lines $A_1A_2$ and $B_1B_2$ meet. 
In summary we have a complete quadrilateral with 6 points.

$n=3$.
A semi-simplex is a triangle. A pair of semi-simplexes 
in perspective is simply is a pair of triangles 
in perspective 
from a vertex. In our situation, 
because of the arc property,
the triangles will not be coplanar. The general result is as follows.
\end{unnamed}

\begin{theorem}
\label{theorem:twoSimplexesArcPointsn+3choose3}
In $\Sigma_n$,
let $A,B$ denote two simplexes in perspective from a point $V$ 
such that $A,B$ share no points or hyperplanes. Let 
$\Gamma= \Gamma_{n+3}$ denote the underlying arc in 
$\Sigma_{n+1}$ giving rise to $A,B$ as in 
Section~\ref{section:simplexesToArcs}.
Let $X_n$ denotes the ${n+3 \choose 2}$ 
points $(i,j)$ in $\Sigma_n$ which are the section of lines joining points
$i,j$ of the arc $\Gamma$.  Then
\begin{enumerate}[a.]
  \item
  $X_n$ consists of two simplexes $A, B$ in perspective, 
  the vertex of perspective, and a subset $Y_n$ of $X_n$ 
  consisting of the intersections of pairs of corresponding edges of $A,B$.
  \item
  The points of $Y_n$ form a semi-simplex pair $C,D$ 
  in perspective from a vertex in $\Sigma_{n-1}$.
  This pair, the vertex of perspective and the intersections 
  of pairs of corresponding edges 
  of $C,D$ account for all the points in $Y_n$.
  \item
  The intersections of pairs of corresponding edges of $C,D$ 
  form a semi-simplex pair $E,F$ 
  in 
  $\Sigma_{n-2}$
  which are in perspective from a vertex.
\end{enumerate}
\end{theorem}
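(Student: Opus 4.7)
The plan is to turn all three parts into transparent bookkeeping consequences of the arc--section set-up of Section~\ref{section:arcsToSimplexesInPersp}, via the identification of each point of $X_n$ with an unordered pair $(i,j)$, $1\le i<j\le n+3$. With this labelling one has $A=\{(1,k):3\le k\le n+3\}$, $B=\{(2,k):3\le k\le n+3\}$, $V=(1,2)$; and the corresponding edges $(1,i)(1,j)$ and $(2,i)(2,j)$ meet at the point $(i,j)$, so $Y_n=\{(i,j):3\le i<j\le n+3\}$. Part~a is then the partition identity $\binom{n+3}{2}=2(n+1)+1+\binom{n+1}{2}$ combined with the fact---from Lemma~\ref{lemma:pijDistinctFromPrs} and the arc property---that distinct labels give distinct points, so $A$, $B$, $\{V\}$, $Y_n$ are pairwise disjoint.

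For part~b, Theorem~\ref{theorem:t+1arcGamman+3}(b), applied to the $(n{+}1)$-subset $\{3,\dots,n+3\}$ of $\Gamma_{n+3}$, immediately gives $\langle Y_n\rangle=\Sigma_{n-1}$. I would then mimic the original construction one level down: take labels $3$ and $4$ as the new vertex pair and set
$$C=\{(3,k):5\le k\le n+3\},\qquad D=\{(4,k):5\le k\le n+3\}.$$
Each has $n-1$ points. The projection argument of Theorem~\ref{theorem:n+1PointsSimplex}, applied to $\{5,\dots,n+3\}$ projected from $3$ (respectively~$4$), shows each of $C,D$ generates a hyperplane of $\langle Y_n\rangle$, so they are semi-simplexes in $\Sigma_{n-1}$. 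Perspectivity from $(3,4)$ is the three-points-collinear observation: sectioning the triangle $\{3,4,k\}$ by $H$ yields a line through $(3,4),(3,k),(4,k)$. The edges $(3,i)(3,j)$ of $C$ and $(4,i)(4,j)$ of $D$ both pass through $(i,j)$ (from sectioning the triangles $\{3,i,j\}$ and $\{4,i,j\}$), so the set of edge--intersections is exactly $\{(i,j):5\le i<j\le n+3\}$, of size $\binom{n-1}{2}$; and $2(n-1)+1+\binom{n-1}{2}=\binom{n+1}{2}$ closes the partition of $Y_n$.

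Part~c is one more turn of the same crank: the edge--intersections of $C$ and $D$ form the section by $H$ of the $(n{-}1)$-subset $\{5,\dots,n+3\}$ of the arc, whose ambient dimension is pinned down by Theorem~\ref{theorem:t+1arcGamman+3}(b). Choosing the next vertex pair $\{5,6\}$ and defining $E,F$ analogously---points $(5,k)$ and $(6,k)$ ranging over the remaining labels---the same projection and collinearity arguments produce a semi-simplex pair in perspective from the vertex $(5,6)$.

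The main obstacle is not geometric---Section~\ref{section:arcsToSimplexesInPersp} and Theorem~\ref{theorem:t+1arcGamman+3} have already done the heavy lifting---but disciplined labelling. One must verify at each level of the recursion that the map $\{i,j\}\mapsto(i,j)$ gives distinct points, and that the various subsets really are pairwise disjoint, so that the partition counts in parts~a, b, c genuinely exhaust the claimed point sets; this is precisely what Lemma~\ref{lemma:pijDistinctFromPrs} and the arc property guarantee.
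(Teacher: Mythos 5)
Your proposal is correct and follows essentially the same route as the paper: identify the points of $X_n$ with pairs $(i,j)$, take $Y_n=\{(i,j):3\le i,j\le n+3\}$ lying in a hyperplane, choose $(3,4)$ as the new vertex with the mini-simplexes $\{(3,k)\}$ and $\{(4,k)\}$, and close the count with the identity ${n+1\choose 2}=2(n-1)+1+{n-1\choose 2}$, iterating for part~c. You simply spell out more explicitly (via the projection argument of Theorem~\ref{theorem:t+1arcGamman+3} and Lemma~\ref{lemma:pijDistinctFromPrs}) the semi-simplex property and the disjointness that the paper leaves implicit.
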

\begin{proof}
  The general case is analogous to the configuration of 10 points in 
  $\Sigma_3$ for the case $n=4$ in~\ref{unnamed:initialArcInSigma5}.
  $X_n$ consists of the ${n+3 \choose 2}$ 
  points $(i,j)$ for $i,j$ between $1$ and $n+3$. 
  As in 
  Theorem~\ref{twoSimplexesNoPointNoHyperplaneTwoInPerspective}
  the intersections of pairs of corresponding edges is 
  the subset $Y_n$ of $X_n$ 
  consists of points $(i,j)$ with $i,j$ lying between $3$ and $n+3$.

  $Y_n$ lies in $K$, with $K$ a hyperplane of 
  $\Sigma_n$ of dimension $n-1$, and contains ${n+1 \choose 2}$ 
  points. If we choose say, the point $(3,4)$ as vertex 
  we have two min-simplexes in perspective from it, 
  namely $\{(3,5),(3,6),\ldots,(3,n+3)\}$ and 
  $\{(4,5),(4,6),\ldots,(4,n+3)\}$.

  The set $Y_n$, lying, in $\Sigma_{n-1}$, 
  contains 2 mini-simplexes in $\Sigma_{n-1}$, 
  each having $n-1$ points, 
  along with the point $(3,4)$ as a vertex of perspective
  and contains also ${n-1 \choose 2}$ 
  points of intersection of corresponding edges which lie in a 
  $\Sigma_{n-2}$.

  This accounts for all points in $Y_n$ 
  as follows from the following identity:

\begin{equation}
  {n+1 \choose 2} = 2(n-1) +1 + {n-1 \choose 2}.
\end{equation}

This identity is simply 
Theorem~\ref{twoSimplexesNoPointNoHyperplaneTwoInPerspective}
with $n$ replaced by $n-2$.
Part~c follows from an iteration of the above procedure.
\end{proof}

%
%
%
\section{Some further extensions of Desargues theorem.}
\label{section:furtherExtensionsOfDes}

Using the above notation,
we consider 3 semi-simplexes $A, B,C$ in $\Sigma_n$ 
such that each pair is in perspective from one of 
three vertices of perspective that lie on a line in $X_n$.
Recall that $X_n$ is the set of ${n+3 \choose 2}$ 
points obtained from the arc $\Gamma_{n+3}$ 
consisting of the points $1,2,\ldots,n+2,n+3$.

Without loss of generality the 3 vertices are $(1,2)$, $(1,3)$ 
and $(2,3)$. Neither one of the pair of semi-simplexes 
in perspective from $(1,2)$ is allowed to contain $(1,3)$ or $(2,3)$.
The pair $A,B$ of semi-simplexes, perspective from $(1,2)$ 
must look like $\{(1,4),(1,5),\ldots,(1,n+2) ,(1,n+3)\}$, 
$\{(2,4),(2,5),\ldots,(2,n+2), (2,n+3)\}$. 
If the simplex $C$ is in perspective with $B$ from the point $(2,3)$ then
$C=\{(3,4), (3,5),\ldots,(3,n+2), (3,n+3)\}$.
The pair $A,C$ are then
in perspective from the point $(1,3)$.

We now have the following result which is shown for the 
case $n=3$ \cite[p.\ 64]{lord}.
\begin{theorem}
  Let $A,B ,C$ be three semi-simplexes in 
$\Sigma_n$ such that each of the 3 pairs are 
in perspective from one of three collinear points. 
Then each pair of semi-simplexes is perspective 
from the same hyperplane in $\Sigma_n$.
\end{theorem}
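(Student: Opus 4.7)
The plan is to exploit the explicit labelling of $X_n$ by unordered pairs from $\{1,2,\ldots,n+3\}$, which was set up in the preamble to the statement. We have
$A=\{(1,i):4\le i\le n+3\}$,
$B=\{(2,i):4\le i\le n+3\}$,
$C=\{(3,i):4\le i\le n+3\}$,
with the three vertices $(1,2),(1,3),(2,3)$ being exactly the section by $H$ of the triangle $\{1,2,3\}\subset\Sigma_{n+1}$, hence collinear as required. The claim will reduce to showing that the three axes of perspectivity coincide.

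First I would verify that the intersections of corresponding edges coincide pair by pair. For any $4\le i<j\le n+3$, the edge of $A$ through $(1,i)$ and $(1,j)$ is the section of the plane $\langle 1,i,j\rangle\subset\Sigma_{n+1}$ by $H$; in particular it contains the section of the line $\overline{ij}$, which by definition is the point $(i,j)$. The identical argument, with $1$ replaced by $2$ and then by $3$, shows that the corresponding edges of $B$ and of $C$ also pass through $(i,j)$. Consequently the set of intersections of pairs of corresponding edges is the same set
\[
Z=\{(i,j):4\le i<j\le n+3\}
\]
for each of the three pairs $\{A,B\}$, $\{B,C\}$, $\{A,C\}$.

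Next I would compute $\mathrm{Dim}\,\langle Z\rangle$. The set $U=\{4,5,\ldots,n+3\}$ is an $n$-subset of the $(n+3)$-arc $\Gamma$ in $\Sigma_{n+1}$, and Theorem~\ref{theorem:t+1arcGamman+3}\,a gives $\mathrm{Dim}\,\langle U\rangle=n-1$, while part~b gives that the section of $\langle U\rangle$ by $H$ is a subspace $M\subset\Sigma_n$ of dimension $n-2$ generated by the points $(i,j)$ with $i,j\in U$, i.e.\ by $Z$. Hence $\langle Z\rangle=M$ is one and the same subspace, independently of which of the three pairs of semi-simplexes we start with. This common $M$ is the required axis of perspectivity; more generally, the intersection of corresponding $t$-spaces of any of the three pairs is the same $(t-1)$-space, namely the section of the corresponding $(t+1)$-subset of $U$, again by Theorem~\ref{theorem:t+1arcGamman+3}.

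The main work is the initial incidence observation: recognising that each edge of $A$, $B$, or $C$ indexed by the pair $i,j$ is the section of a plane in $\Sigma_{n+1}$ containing the arc-line $\overline{ij}$, and hence is forced to contain $(i,j)$. Once this is noticed, the remainder is a direct appeal to Theorem~\ref{theorem:t+1arcGamman+3}. A minor bookkeeping point — that $Z$ consists of $\binom{n}{2}$ genuinely distinct points so that the axis is really spanned by them — is the distinctness of labelling already used in Section~\ref{section:theConfigs}.
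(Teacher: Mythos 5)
Your proposal is correct and follows essentially the same route as the paper: both identify the common set of edge-intersections as $Z=\{(i,j):4\le i,j\le n+3\}$ via the arc labelling and then invoke Theorem~\ref{theorem:t+1arcGamman+3} to see that $Z$ spans the section of $\langle 4,5,\ldots,n+3\rangle$ by $H$, a single axis independent of which pair is chosen. Your write-up merely makes explicit the incidence argument (each edge $(1,i)(1,j)$ is the section of the plane $\langle 1,i,j\rangle$ and so contains $(i,j)$) that the paper leaves as a reference back to Theorem~\ref{twoSimplexesNoPointNoHyperplaneTwoInPerspective}.
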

\begin{proof}
As in the proof of 
Theorem~\ref{twoSimplexesNoPointNoHyperplaneTwoInPerspective}
the intersections of corresponding pairs of edges for each 
pair of simplexes lies in a hyperplane $Z$ 
generated by the set $\{(i,j)\}$
where $i,j$ lie between $4$ and $n+3$. 
Alternatively 
as in the proof of
Theorem~\ref{theorem:t+1arcGamman+3},
$Z=\langle \{(4,5), (4,6),\ldots,(4,n+2),(4,n+3)\}\rangle$. 
\end{proof}

%
%
%
%
\section{A fourth proof of an extended Desargues Theorem.}
\label{section:fourthProof}
\begin{theorem}
  Let $\mathbf{A},\mathbf{B}$ be simplexes in $\Sigma_n$
  which are in perspective from a point and share no points or faces.
  Then the intersections of corresponding edges of
  $\mathbf{A},\mathbf{B}$ lie in a hyperplane of $\Sigma_n$.
\end{theorem}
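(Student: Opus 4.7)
The plan is to imitate the classical three-dimensional proof of the planar Desargues theorem --- the approach taken by Conway and Ryba~\cite{conway} --- by lifting the given perspective configuration one dimension upward. In the enlarged space $\Sigma_{n+1}$ the conclusion will reduce to the elementary fact that two distinct hyperplanes meet in a subspace of dimension $n-1$.

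Writing the vertices of $\mathbf{A}, \mathbf{B}$ as $A_1,\ldots,A_{n+1}$ and $B_1,\ldots,B_{n+1}$, I would embed $\Sigma_n$ as a hyperplane $H$ of some $\Sigma_{n+1}$, pick a line $\ell$ through the vertex $V$ of perspective that is not contained in $H$, and select two distinct points $P_1, P_2$ on $\ell$, each different from $V$. For every $i$, the lines $P_1 A_i$ and $P_2 B_i$ lie in the common plane spanned by $\ell$ together with the line $V A_i = V B_i$, and because they pass through distinct points of $\ell$ they meet in a unique point $C_i$. Since $A_i \neq B_i$ by the no-common-points hypothesis, $C_i$ lies off $H$. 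By construction $C_i$ projects from $P_1$ to $A_i$ and from $P_2$ to $B_i$.

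Next I would verify that $H' := \langle C_1,\ldots,C_{n+1}\rangle$ is a hyperplane of $\Sigma_{n+1}$ distinct from $H$. The span $\langle P_1, C_1,\ldots,C_{n+1}\rangle$ contains $P_1$ and every $A_i = P_1 C_i \cap H$, hence contains $\langle P_1, H\rangle = \Sigma_{n+1}$. Since $n+2$ points span $\Sigma_{n+1}$, the dimension formula forces $P_1 \notin H'$ and $\dim H' = n$; and because every $C_i$ lies off $H$, we have $H' \neq H$. Therefore $M := H \cap H'$ is an $(n-1)$-dimensional subspace of $H = \Sigma_n$.

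The proof then concludes as follows: the edge $C_iC_j$ of the lifted simplex projects from $P_1$ to $A_iA_j$ and from $P_2$ to $B_iB_j$, so its unique intersection point with $H$ lies on both corresponding edges and coincides with the point $P_{ij}$ from Theorem~\ref{theorem:twoSimplexesFourParts}. Since $C_iC_j \subset H'$, this $P_{ij}$ belongs to $H \cap H' = M$, so every intersection of corresponding edges lies in the hyperplane $M$ of $\Sigma_n$, as required. The one step that has to be handled with care is the dimension count identifying $H'$ as a hyperplane distinct from $H$; everything else is a direct application of the projection identity $P_1 C_i \cap H = A_i$ at the level of lines rather than points.
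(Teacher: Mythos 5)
Your argument is correct, but it is not the same lift as the paper's fourth proof, despite your stated intention to imitate Conway--Ryba. The paper's Section~\ref{section:fourthProof} proof raises only the single pair $A_2,B_2$ to $A_2^*,B_2^*$ via one external point $W$, places the two lifted simplexes in distinct hyperplanes $H_1,H_2$ of $\Sigma_{n+1}$, and then projects $H_1\cap H_2$ back into $\Sigma_n$ from $W$. You instead lift the \emph{entire} configuration: choosing two centres $P_1,P_2$ on a line through $V$ off $H$ and setting $C_i=P_1A_i\cap P_2B_i$, you realize $\mathbf{A}$ and $\mathbf{B}$ as two projections of a single simplex $\{C_i\}$, so the edge-intersections are literally the traces $C_iC_j\cap H$ and land in $H\cap H'$ with no back-projection needed. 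This is in substance the construction of Theorem~\ref{theorem:twoSimplexesNoPtNoFaceArcHyperplane} (the points $1,2$ and the arc points $3,\dots,n+3$ of Sections~\ref{section:arcsToSimplexesInPersp}--\ref{section:simplexesToArcs}), stripped of the full arc machinery: you do not need every $(n+2)$-subset to be independent, only that $H'=\langle C_1,\dots,C_{n+1}\rangle$ is a hyperplane distinct from $H$, which your span computation $\langle P_1,H'\rangle=\Sigma_{n+1}$ delivers cleanly. What your route buys is that it sidesteps two points the paper's fourth proof leaves tacit, namely that corresponding edges of the lifted simplexes involving the index $2$ still meet, and that $W$ lies off $L^*$ so that the back-projection is again an $(n-1)$-space; what it costs is that it silently uses the hypothesis that the $A_i$ span $H$ (the simplex property) and the nondegeneracy $A_i\neq V$, $B_i\neq V$, both of which are available here. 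Either way the conclusion is sound.
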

\begin{proof}
We use the method in~\cite{conway} in the case $n=2$.
In detail, let $A_1,A_2,A_3$ and $B_1,B_2,B_3$ form triangles
in the plane $\pi$ which are in perspective from $V$
and share no vertices or edges.
Choose any point $W$ of the 3-space containing $\pi$
that is not in $\pi$.
Let $A_2^*\ne W,A_2$ be a point on the line $WA_2$.
Then $A_2^*$ is not in $\pi$.
The plane $\sigma$ formed from $W,A_2,B_2$ also contains
$A_2^*$ and $V$.

We define $B_2^*$ as the intersection of $VA_2^*$ and $WB_2$.
The triangles $A_1,A_2^*,A_3$ and $B_1,B_2^*,B_3$
are in perspective from $V$ and do not lie in a plane.
Thus, the intersections of corresponding lines of these two triangles
lie in a line $l^*$ which is the intersection of the
planes of the two triangles.

$W$ projects the two triangles to the plane $\pi$.
It projects lines $A_1A_2^*, B_1B_2^*$ to lines $A_1A_2,B_1B_2$.
The intersection of $A_1A_2^*$ and $B_1B_2^*$
is projected by $W$ to the point of intersection in $\pi$
of lines $A_1A_2,B_1B_2$.
Similarly the intersection point of lines $A_2^*A_3,B_2^*B_3$
is projected to the intersection point of lines $A_2A_3$
and $B_2B_3$.
Since $A_1,A_3,B_1,B_3$ are in $\pi$
the point $A_1A_3\cap B_1B_3$ is projected to itself.
In summary, $W$ projects $l^*$ to the line $l$ in $\pi$
containing the 3 points of intersection of corresponding
lines of the triangles $A_1A_2A_3$, $B_1B_2B_3$.
This proves the planar theorem.

In $\Sigma_n$ we have two simplexes $\mathbf{A},\mathbf{B}$
in perspective from a point $V$ with
$\mathbf{A}=\{A_1,A_2,\ldots,A_{n+3}\}$,
$\mathbf{B}=\{B_1,B_2,\ldots,A_{n+3}\}$.
$\mathbf{A},\mathbf{B}$ share no points or faces.
As above, a point $W$ not in $\Sigma_n$ lifts
$A_2,B_2$ to points $A_2^*,B_2^*$ in $\Sigma_{n+1}$,
not in $\Sigma_n$.
The lifted simplexes $\mathbf{A^*}=\{A_1,A_2^*,A_3,\ldots,A_{n+3}\}$
and $\mathbf{B^*}=\{B_1,B_2^*,B_3,\ldots,B_{n+3}\}$
lie in distinct hyperplanes
$H_1,H_2$ of $\Sigma_{n+1}$.
The intersections of corresponding edges lie in
$L^*=H_1\cap H_2$.
As above 
$W$ projects $L^*$ to a hyperplane $L$ of
$\Sigma_n$ containing the intersection
of corrspondinding edges of $\mathbf{A},\mathbf{B}$
in $\Sigma_n$.  This proves the theorem.
\end{proof}

%
%
%
%
\section{Concluding Remarks.}

\bigskip\noindent
{\bf
Acknowledgement:}
The author acknowledges the support of the 
National Science and Engineering Research Council of Canada 
over the last fifty years. He is also grateful to the 
National Research Council of Italy for supporting his work 
over many of these fifty years.

He thanks Professor James McQuillan of Western Illinois University 
for his insights and assistance with this work.

\bigskip
\bibliography{BruenDesarguesExtension}

\end{document}